\numberwithin{equation}{section}
\theoremstyle{plain}
\newtheorem{theorem}{Theorem}[section]
\newtheorem{lemma}[theorem]{Lemma}
\newtheorem{corollary}[theorem]{Corollary}
\newtheorem{proposition}[theorem]{Proposition}
\theoremstyle{definition}
\newtheorem{defn}[theorem]{Definition}
\newtheorem{remark}[theorem]{Remark}
\def\Z{\mathbb{Z}}
\def\R{\mathbb{R}}
\def\T{\mathbb{T}}
\def\C{\mathbb{C}}
\def\N{\mathbb{N}}
\def\Ghat{\widehat{G}}
\def\Hhat{\widehat{H}}
\def\fhat{\widehat{f}}
\def\ghat{\widehat{g}}
\newcommand{\eulerian}[2]{\genfrac{\langle}{\rangle}{0pt}{}{#1}{#2}}
\newcommand{\ud}{\,\mathrm{d}}
\newcommand{\Zmod}[1]{\Z_{#1}} 
\providecommand{\abs}[1]{\lvert#1\rvert}
\providecommand{\norm}[1]{\lVert#1\rVert}
\providecommand{\Tsol}{S}
\providecommand{\md}{d}
\title[Maximal densities of sets avoiding linear equations modulo a prime]{On the asymptotic maximal density of a set avoiding solutions to linear equations modulo a prime}
\author{Pablo Candela}
\address{Centre for Mathematical Sciences\\
	Wilberforce Road\\
	Cambridge CB3 0WB\\
	United Kingdom}
\email{pc308@cam.ac.uk}
\author{Olof Sisask}
\address{School of Mathematical Sciences\\
	Queen Mary, University of London\\
	Mile End Road\\
	London E1 4NS\\
	United Kingdom}
\email{O.Sisask@qmul.ac.uk}
\thanks{Both authors are EPSRC postdoctoral fellows and gratefully acknowledge the support of the EPSRC}
\subjclass[2010]{Primary 11B30; Secondary 43A25}
\keywords{Sets free from solutions to linear equations, Fourier analysis, removal lemma}
\begin{document}

\begin{abstract}
Given a finite family $\mathcal{F}$ of linear forms with integer coefficients, and a compact abelian group
$G$, an $\mathcal{F}$-free set in $G$ is a measurable set which does not contain solutions to any equation $L(x)=0$ for $L$ in $\mathcal{F}$. We denote by $d_\mathcal{F}(G)$ the supremum of $\mu(A)$ over $\mathcal{F}$-free sets $A\subset G$, where $\mu$ is the normalized Haar measure on $G$. Our main result is that, for any such collection $\mathcal{F}$ of forms in at least three variables, the sequence $d_\mathcal{F}(\Zmod{p})$ converges to $d_\mathcal{F}(\R/\Z)$ as $p\to\infty$ over primes. This answers an analogue for $\Zmod{p}$ of a question that Ruzsa raised about sets of integers.
\end{abstract}

\maketitle

\section{Introduction}
Much work in arithmetic combinatorics concerns the maximal density that a subset of a finite abelian group can have if it does not contain a non-trivial solution to a given linear equation. Examples include the study of sum-free sets, where the equation to be avoided is $x_1+x_2-x_3=0$, and the improvement of bounds for Roth's theorem, which concerns the equation $x_1-2x_2+x_3=0$.

A natural question about these maximal densities is whether they exhibit some particular asymptotic behaviour as the groups get larger in a family of groups, and a typical family to consider in this context is that of the groups $\Zmod{p}$ of residues modulo a prime. One may thus ask whether the maximal density of a subset of $\Zmod{p}$ avoiding solutions to some linear equation converges as $p\to \infty$. The analogous question for subsets of the integers $\{1, \ldots, N\}$ was raised by Ruzsa \cite[Problem 2.3]{ruzsa:linear-equationsII}, who conjectured that the corresponding limit exists for any linear equation. The main result in this paper implies that for the groups $\Zmod{p}$ the maximal density does indeed converge. The result itself is more general, and to state it precisely we shall use the following notation.
\begin{defn}\label{L-free}
Let $L(x) = c_1 x_1 + \cdots + c_t x_t$ be a linear form with non-zero integer coefficients. We say that a subset $A$ of an abelian group $G$ is \emph{$L$-free} if there is no $t$-tuple $x=(x_1,\ldots,x_t) \in A^t$ such that $L(x) = 0$. For a family $\mathcal{F}$ of linear forms we say that $A$ is \emph{$\mathcal{F}$-free} if $A$ is $L$-free for every $L \in \mathcal{F}$.
\end{defn}
For a given family $\mathcal{F}$ of linear forms, we define
\[ \md_\mathcal{F}(\Zmod{p}) = \max\{ |A|/p : \text{$A \subset \Zmod{p}$ and $A$ is $\mathcal{F}$-free} \}. \]
Our main result implies firstly that for any finite family $\mathcal{F}$ of forms in at least 3 variables, the maximal density $\md_\mathcal{F}(\Zmod{p})$ converges as $p \to \infty$ over primes. This is analogous to a result of Croot \cite{croot} establishing the convergence of the minimal normalised-count of three-term arithmetic progressions in subsets of $\Z_p$ of some fixed density, 
and indeed a variant of Croot's method will be an essential part of our argument. Before stating the result in full, let us note that convergence can fail in our result if we do not restrict to prime moduli, as was also the case in \cite{croot}. Indeed, sum-free sets in $\Zmod{p}$ are easily shown to have maximal density converging to $1/3$ as $p \to \infty$ over primes (using the Cauchy-Davenport inequality \cite{T-V}), but in $\Zmod{2p}$ the odd residues form a sum-free set of density $1/2$.

The convergence of $\md_\mathcal{F}(\Zmod{p})$ leads to the problem of determining the limit. To address this it is potentially helpful to have a fixed group on which the limit can be analyzed. We show that the circle is one possible such group.
\begin{defn}
For a family $\mathcal{F}$ of linear forms and a compact abelian group $G$ with normalized Haar measure $\mu$, we define
\[\md_\mathcal{F}(G) = \sup \{ \mu(A) : \text{$A \subset G$ is measurable and $\mathcal{F}$-free}\}.\]
\end{defn}

We can now state our main result.

\begin{theorem}\label{mainres}
Let $\mathcal{F}$ be a finite family of linear forms, each in at least three variables.
Then $\md_{\mathcal{F}}(\Zmod{p})\to \md_{\mathcal{F}}(\T)\textrm{ as }p \to \infty\textrm{ over primes.}$
\end{theorem}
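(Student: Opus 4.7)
The plan is to establish the two inequalities $\liminf_{p} \md_\mathcal{F}(\Zmod{p}) \geq \md_\mathcal{F}(\T)$ and $\limsup_{p} \md_\mathcal{F}(\Zmod{p}) \leq \md_\mathcal{F}(\T)$ separately. For the \emph{lower bound}, given $\epsilon > 0$, pick a measurable $\mathcal{F}$-free $A \subset \T$ with $\mu(A) > \md_\mathcal{F}(\T) - \epsilon$, and by inner regularity of Haar measure on $\T$ approximate it from inside by a finite union of closed intervals $A' \subset A$ with $\mu(A') \geq \mu(A) - \epsilon$; being a subset, $A'$ is still $\mathcal{F}$-free. Set $B_p := \{a \in \Zmod{p} : a/p \in A'\}$. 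The key arithmetic correspondence is that, using integer representatives $x_i \in \{0,1,\dots,p-1\}$, the congruence $L(x) \equiv 0 \pmod p$ means $L(x) \in p\Z$, which is equivalent to $L(x_1/p, \dots, x_t/p) \in \Z$, i.e.\ to $L$ vanishing in $\T$. Thus any $L$-solution in $B_p$ would give a $\T$-solution in $(A')^t \subset A^t$, contradicting $\mathcal{F}$-freeness of $A$; so $B_p$ is $\mathcal{F}$-free. Equidistribution of $\{0/p, 1/p, \dots, (p-1)/p\}$ in $\T$ forces $|B_p|/p \to \mu(A')$ as $p \to \infty$, giving $\liminf_p \md_\mathcal{F}(\Zmod{p}) \geq \md_\mathcal{F}(\T) - 2\epsilon$; let $\epsilon \to 0$.

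For the \emph{upper bound}, fix a sequence of primes $p_n \to \infty$ and extremal $\mathcal{F}$-free sets $A_n \subset \Zmod{p_n}$ with $|A_n|/p_n \to \alpha := \limsup_p \md_\mathcal{F}(\Zmod{p})$; the task is to produce a measurable $\mathcal{F}$-free $B \subset \T$ of measure at least $\alpha$. A first attempt is the natural lift $\widetilde{A}_n := \bigcup_{a \in A_n} [a/p_n, (a+1)/p_n)$, which has measure $|A_n|/p_n$ but may fail to be $\mathcal{F}$-free in $\T$. Indeed, writing $y_i = (a_i + \delta_i)/p_n$ with $a_i \in A_n$ and $\delta_i \in [0,1)$, the equation $L(y) = 0$ in $\T$ is equivalent to $L(a) + \sum_i c_i \delta_i \in p_n \Z$, which can occur for near-solutions $(a_1, \dots, a_t) \in A_n^t$ satisfying $L(a) \equiv k \pmod{p_n}$ for some integer $k$ with $0 < |k| \leq C := \sum_i |c_i|$ (the case $k = 0$ being excluded by $L$-freeness of $A_n$). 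Each such near-solution contributes a $(t-1)$-dimensional sheet of $\T$-solutions inside $\widetilde{A}_n^t$, and the task is to modify $\widetilde{A}_n$ on a set of measure $o(1)$ so as to eliminate all of them.

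The main obstacle, and precisely where a variant of Croot's Fourier-compactness method will be indispensable, is to bound or eliminate these near-solutions uniformly in $p_n$. My plan would be to pass by diagonalisation to a subsequence along which all Fourier coefficients $\widehat{1_{\widetilde{A}_n}}(k)$ ($k \in \Z$) converge, giving a limit $g \in L^\infty(\T)$ with $0 \leq g \leq 1$ and $\int_\T g = \alpha$. One then uses: (i) the identity $\sum_{\xi \neq 0} \prod_i \widehat{1_{A_n}}(c_i \xi) = -|A_n|^t$ forced by $L$-freeness via the standard Fourier expansion of the solution count, which for $t \geq 3$ yields genuine multilinear information about the Fourier profile; and (ii) an arithmetic removal lemma for the family of inhomogeneous near-equations $\{L(x) \equiv k \pmod{p_n} : L \in \mathcal{F},\ 0 < |k| \leq C\}$, used to delete $o(p_n)$ elements from each $A_n$ and obtain $A_n' \subset A_n$ whose lift $\widetilde{A_n'}$ is genuinely $\mathcal{F}$-free in $\T$ and has measure $\alpha - o(1)$; alternatively the limit $g$ could be used directly, via a super-level-set argument, to extract $B$. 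The three-variable hypothesis enters through the multilinear nature of both the Fourier identity in (i) and the removal step in (ii), and the crucial challenge is to show that the limit object $g$ is, in an appropriately quantitative sense, compatible with being an indicator of an $\mathcal{F}$-free set in $\T$.
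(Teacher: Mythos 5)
Your lower-bound direction is fine in spirit (it is essentially the paper's Remark 6.2), but the upper bound is where the theorem's real content lies, and there the proposal has a genuine gap. Your plan rests on two steps that do not work as stated. First, the weak limit $g$ obtained by diagonalising the Fourier coefficients of the lifts $1_{\widetilde{A}_n}$ carries no information about solution counts: the functional $\Tsol_L(f)=\sum_{k\in\Z}\widehat{f}(c_1k)\cdots\widehat{f}(c_tk)$ is not continuous under pointwise convergence of Fourier coefficients, because the relevant mass can sit at frequencies of size comparable to $p_n$; for instance the lifts of the sets below converge weakly to the constant $1/3$, which is as far from solution-free as a function of that mean can be. Second, and more fundamentally, step (ii) presumes that the number of near-solutions $L(a)\equiv k \pmod{p_n}$, $0<|k|\leq C$, inside $A_n^t$ is $o(p_n^{t-1})$, so that a removal argument can delete $o(p_n)$ elements and kill them all. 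Nothing in $\mathcal{F}$-freeness gives this: freeness constrains only the homogeneous equation. Concretely, take $L=x_1+x_2+x_3$ and $p\equiv 1\pmod 3$, and let $A=\{a\in[0,p):a\equiv 1\pmod 3\}\subset\Zmod{p}$. Then $a_1+a_2+a_3$ is an integer in $[3,3p)$ divisible by $3$, while $p$ and $2p$ are not, so $A$ is $L$-free; but $a_1+a_2+a_3=2p+1$ has $\gg p^2$ solutions in $A^3$, i.e.\ $\gg p^{t-1}$ near-solutions with $k=1$. Consequently the lift $\widetilde{A}$ satisfies $\Tsol_L(\widetilde{A})\geq c$ for an absolute constant $c>0$, and since deleting sets $E_i$ of measure $\eta$ can decrease the solution measure by at most $t\eta$, no modification of $\widetilde{A}$ on a set of measure $o(1)$ can be $\mathcal{F}$-free. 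Your pipeline is applied to extremal sets $A_n$, so this example does not literally refute it, but you would have to prove that (near-)extremal sets have $o(p^{t-1})$ near-solutions, and you offer no argument for that; this is precisely the difficulty — $\Zmod{p}$-freeness does not control the inhomogeneous equations, equivalently the high-frequency Fourier behaviour of the lift — that the theorem has to overcome. The identity in (i) is just $\Tsol_L(A_n)=0$ rewritten on the Fourier side and is not put to any substantive use.

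Two further remarks. In the lower bound, inner regularity does not give a finite union of intervals inside $A$: an $\mathcal{F}$-free set of positive measure may contain no interval at all (fat-Cantor-type sets), and for a general compact $K\subset A$ the points $a/p$ need not equidistribute in $K$ ($K$ could avoid all rationals, making $B_p$ empty). The repair is to take a compact $K\subset A$ with $\mu(K)>\mu(A)-\epsilon$ and then thicken it: for each $L\in\mathcal{F}$ the compact sets $K^t$ and $\ker L$ are disjoint, hence at positive distance, so a small open neighbourhood $A'$ of $K$ is still $\mathcal{F}$-free, and open sets (finite unions of intervals up to small measure) do support the lattice-point count you want. Finally, for the upper bound the paper replaces your lift-and-repair scheme by a transference argument of Croot type: one regularises $1_{A_n}$ to a function whose Fourier support is a bounded set $R\subset\widehat{\Zmod{p_n}}$, maps $R$ (in fact $R^h$) Freiman-isomorphically into $\widehat{\T}=\Z$, and rebuilds a function $g$ on $\T$ with the same mean and the same (small) $\Tsol_L$ for every $L\in\mathcal{F}$; only then does one convert $g$ into a set (via a probabilistic $U^2$-approximation) and apply a removal lemma on $\T$. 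Some ingredient playing this role — bounding the spectrum and transporting it while preserving the relevant additive relations — is exactly what is missing from your sketch.
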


The methods in this paper do not allow to extend this result to families containing forms in two variables, essentially because such forms do not
fall under the purview of Fourier analysis; in the language of \cite{GTlin}, equations in two variables are of unbounded complexity, while equations in at least three variables are of complexity 1. (Note however that for a single non-trivial form $L$ in two variables it is easy to show that $d_L(\Zmod{p})\to 1/2$.)

It is an important fact that if $L$ is a translation-invariant form, that is a form $c_1 x_1 + \cdots + c_t x_t$ with $c_1 + \cdots + c_t = 0$, then $\md_L(\Zmod{p}) \to 0$ as $p \to \infty$, even if we allow an $L$-free set
to contain certain trivial solutions (constant solutions $(x,\ldots,x)$, for instance); this follows from Roth's method, as observed in a very similar context in \cite[Theorem 1.3]{ruzsa:linear-equationsI}. Therefore the limit in Theorem \ref{mainres} is also $0$ for any family $\mathcal{F}$ containing a translation-invariant form. On the other hand, if every member of $\mathcal{F}$ is not translation-invariant (to be concise let us say \emph{non-invariant}), then the limit will be positive.
\begin{proposition}\label{positivity}
Let $\mathcal{F}$ be a finite family of non-invariant linear forms, each in at least three variables. Then
$\md_{\mathcal{F}}(\T) = \lim_{p \to \infty} \md_{\mathcal{F}}(\Zmod{p}) > 0$.
\end{proposition}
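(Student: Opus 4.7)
The equality in the proposition is immediate from Theorem \ref{mainres}, so the only content is the strict positivity $\md_{\mathcal{F}}(\T) > 0$. My plan is to construct an explicit $\mathcal{F}$-free arc in $\T$ of positive length.

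For each form $L(x) = c_1 x_1 + \cdots + c_t x_t$ in $\mathcal{F}$, set $S_L = c_1 + \cdots + c_t$, which is nonzero by the non-invariance hypothesis. The guiding observation is that $L$ is ``almost constant'' on a small diagonal neighborhood: if each $x_i$ lies within $\epsilon$ of some $\alpha \in \T$, then $\|L(x) - S_L \alpha\|_\T \leq \epsilon \sum_i |c_i|$ by the triangle inequality. Consequently, if one can choose $\alpha$ so that $\|S_L \alpha\|_\T$ is bounded below uniformly in $L \in \mathcal{F}$, then a sufficiently small arc around $\alpha$ will be $\mathcal{F}$-free.

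Concretely, I would set $M = \max_{L\in\mathcal{F}} |S_L|$ and take $\alpha = 1/(2M+1) \in \T$, which gives $\|S_L \alpha\|_\T \geq 1/(2M+1)$ for every $L \in \mathcal{F}$, since $1 \leq |S_L| \leq M$. Picking $\epsilon > 0$ small enough that $\epsilon \sum_i |c_i| < \|S_L \alpha\|_\T$ for every $L$, the open arc $A$ of length $2\epsilon$ centered at $\alpha$ then satisfies $\|L(x)\|_\T > 0$ for every $(x_1,\ldots,x_t) \in A^t$ (including constant tuples), so $A$ is $\mathcal{F}$-free of positive measure.

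The argument presents no real obstacle; it simply formalizes the fact that a non-invariant form ``detects translation'' and can therefore be evaded by concentrating near a suitably generic nonzero point. The same reasoning makes clear why the non-invariance hypothesis is indispensable here: an invariant form vanishes on every constant tuple, and no such localization can avoid those trivial solutions.
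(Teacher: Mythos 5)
Your proof is correct and essentially matches the paper's: both establish positivity by exhibiting a short $\mathcal{F}$-free arc centred at a point whose diagonal value $S_L\cdot(\text{centre})$ is bounded away from $0$ in $\Tnorm{\cdot}$ for every $L\in\mathcal{F}$ (using non-invariance), with the equality coming from Theorem \ref{mainres}. The only difference is that you choose the centre explicitly as $\alpha=1/(2M+1)$, whereas the paper locates a good translate of a fixed interval by a measure/pigeonhole argument, which yields the slightly sharper quantitative bound $\md_\mathcal{F}(\T)\geq \bigl(\sum_{L\in\mathcal{F}} s_L\bigr)^{-1}$.
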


This situation is thus analogous to that of the maximal edge-densities of graphs with forbidden subgraphs: if one of the forbidden subgraphs is bipartite then the maximal edge-density of a graph on $n$ vertices that does not contain the forbidden subgraphs tends to $0$ as $n \to \infty$; otherwise it tends to a positive limit determined by the chromatic numbers of the forbidden subgraphs.

It would be very interesting to have a simple asymptotic formula for $\md_\mathcal{F}(\Zmod{p})$ in terms of the coefficients of the linear forms, if indeed such a formula exists. While Theorem \ref{mainres} does not give such a formula, it does show that it suffices to analyze families of linear equations on $\T$ in order to find one.

The proof of Theorem \ref{mainres} and the remainder of this paper are laid out as follows. In Section \ref{TL_props} we gather tools that will enable us to use Fourier analysis to tackle the problem. In Section \ref{Transference} we review and extend a result from \cite{OlofTh}, itself based on work of Croot \cite{croot}, that allows us to transfer a solution-free set in $\Zmod{p}$ of density $\alpha$ to an almost-solution-free function on $\T$ with average $\alpha$. Section \ref{functions-to-sets} then shows how one may use this function to produce an almost-solution-free set of density close to $\alpha$; we then use a removal lemma on $\T$, proved in Section \ref{Removal}, to obtain a truly solution-free subset of $\T$ of density close to $\alpha$.
In Section \ref{Final} we combine these results to prove Theorem \ref{mainres}, and we also prove Proposition \ref{positivity}.

The transference result in Section \ref{Transference} concerns general compact abelian groups and we shall therefore work in
this general setting until the end of that section. From Section \ref{functions-to-sets} onwards we shall focus on the groups $\Zmod{p}$ and $\T$.

\section{Properties of $\Tsol_L$}\label{TL_props}
One of the main tools used in this paper is a multilinear operator associated with a given linear form $L$. This operator, which we denote $\Tsol_L$, is well known in arithmetic combinatorics in the setting of finite groups. In this section we define this operator and establish some of its main properties.

In a finite abelian group $G$, a set $A$ is $L$-free in the sense of Definition \ref{L-free} if and only if $|A^t\cap \ker L|=0$,
where $t$ is the number of variables in $L$ and $\ker L$ denotes the subgroup $\{x\in G^t:L(x)=0\}$ of the direct product $G^t$.
The quantity $|A^t\cap \ker L|/|\ker L|$ is the normalized count of solutions to $L(x)=0$ inside $A$. This notion has a natural generalization to any compact abelian group $G$, using the normalized Haar measure on $\ker L$.

\begin{defn}\label{TLs}
Let $G$ be a compact abelian group, let $L$ be a linear form in $t$ variables, and let $\mu_L$ denote the Haar measure on the closed subgroup $\ker L$ of $G^t$ satisfying $\mu_L(\ker L)=1$. Then for any $t$ measurable sets $A_1,A_2,\ldots,A_t$ we define the \emph{solution measure}
$\Tsol_L(A_1,\ldots,A_t) = \mu_L\big( (A_1\times \cdots\times A_t) \cap \ker L\big)$.
When $A_1=A_2=\cdots=A_t=A$ we write more concisely $\Tsol_L(A)$.
\end{defn}

The operator $\Tsol_L$ is the result of extending this definition from sets to functions.

\begin{defn}\label{TLf}
Let $G$ be a compact abelian group, let $L$ be a linear form in $t$ variables, and let $f_1,f_2,\ldots,f_t : G\rightarrow \C$ be measurable functions. We then define
\begin{equation}\label{TLfeq}
\Tsol_L(f_1,\ldots,f_t) = \int_{\ker L} f_1\otimes f_2\otimes \cdots \otimes f_t(x) \ud\mu_L(x),
\end{equation}
where $f_1\otimes f_2\otimes \cdots \otimes f_t(x) := f_1(x_1)f_2(x_2)\cdots f_t(x_t)$ for all $x=(x_1,\ldots,x_t) \in \ker L \subset G^t$. When $f_1=f_2=\cdots=f_t=f$ we write more concisely $\Tsol_L(f)$.
\end{defn}

Clearly Definitions \ref{TLs} and \ref{TLf} agree on measurable sets $A_1,\ldots,A_t$, that is we have $\Tsol_L(A_1,\ldots,A_t)=\Tsol_L(1_{A_1},\ldots,1_{A_t})$, where $1_X$ denotes the indicator function of a set $X$.

Let us recall the following standard fact, which will be used throughout the paper.
\begin{lemma}\label{cts-surj-hom}
Let $G$ and $H$ be compact abelian groups and let $\phi : G \to H$ be a surjective continuous homomorphism. Then $\phi$ preserves the normalized Haar measures, that is $\mu_H = \mu_G \circ \phi^{-1}$.
\end{lemma}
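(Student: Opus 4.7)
The plan is to deduce the identity from the uniqueness of the normalised Haar measure on a compact abelian group. Concretely, I would show that the pushforward $\nu := \mu_G \circ \phi^{-1}$ is a well-defined regular Borel probability measure on $H$ that is translation-invariant; by uniqueness this forces $\nu = \mu_H$.

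For well-definedness as a Borel measure, continuity of $\phi$ ensures that $\phi^{-1}(E)$ is Borel whenever $E$ is Borel, so $\nu(E) = \mu_G(\phi^{-1}(E))$ makes sense and is countably additive. Regularity of $\nu$ follows from the standard fact that the pushforward of a regular Borel measure under a continuous map between compact Hausdorff spaces is regular. Total mass is immediate: $\nu(H) = \mu_G(\phi^{-1}(H)) = \mu_G(G) = 1$.

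For translation invariance, fix $h \in H$ and a Borel set $E \subset H$. By surjectivity of $\phi$ pick $g \in G$ with $\phi(g) = h$. Since $\phi$ is a homomorphism one checks the set equality $\phi^{-1}(h + E) = g + \phi^{-1}(E)$: if $\phi(x) \in h + E$ then $\phi(x - g) \in E$ so $x \in g + \phi^{-1}(E)$, and the reverse inclusion is the same computation read backwards. Therefore
\[
\nu(h + E) = \mu_G\big(g + \phi^{-1}(E)\big) = \mu_G\big(\phi^{-1}(E)\big) = \nu(E),
\]
using translation invariance of $\mu_G$ in the middle equality. Hence $\nu$ is a translation-invariant regular Borel probability measure on $H$, and by the uniqueness of the normalised Haar measure we conclude $\nu = \mu_H$.

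There is no real obstacle here — the only mild subtlety is verifying regularity of the pushforward, but this is a standard consequence of working with continuous maps between compact Hausdorff spaces, so the proof essentially reduces to the translation-invariance computation above together with the uniqueness statement for Haar measure.
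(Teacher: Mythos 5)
Your proof is correct: the pushforward argument via translation invariance (using surjectivity to lift the translating element) together with uniqueness of the normalised Haar measure is exactly the standard argument, and the paper itself offers no proof, simply recalling the lemma as a standard fact. The one point you flag, regularity of the pushforward, is indeed the only technical detail, and your appeal to continuity between compact Hausdorff spaces handles it.
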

We shall also use the Gowers $U^2$ norm.
\begin{defn}
Let $\mathcal{L}_2(G)$ denote the Hilbert space of square-integrable complex-valued functions on $G$. 
The $U^2$ norm $\norm{\cdot}_{U^2(G)}$ can be defined on $\mathcal{L}_2(G)$ by the formula
\begin{equation}\label{U2}
\norm{f}_{U^2(G)}^4=\int_{G^4} f(x+y)\,\overline{f(x+y')}\,\overline{f(x'+y)}\, f(x'+y') \ud x\ud x'\ud y\ud y'.
\end{equation}
When the group $G$ in question is clear we write more concisely $\norm{f}_{U^2}$.
\end{defn}
By a simple application of the Cauchy-Schwarz inequality to the right hand side of \eqref{U2}, we obtain the bound $\norm{f}_{U^2}\leq\norm{f}_{\mathcal{L}_2}$.

For an integer $n$ we let $n\cdot X$ denote the image of $X\subset G$ under the continuous map $x\mapsto n x$. We refer to this map as \emph{dilation} by $n$ or $n$-\emph{dilation}.

The following result is well known in the setting of finite groups and will be used below.
\begin{theorem}\label{U2control}
Let $L$ be a linear form in $t\geq 3$ variables, and let $G$ be a compact abelian group such that each coefficient of $L$ gives a surjective dilation on $G$. Then for any $f_1,\ldots,f_t\in \mathcal{L}_2(G)$ we have
\begin{equation}\label{U2C}
|\Tsol_L(f_1,\ldots,f_t)|\leq \min_{i\in [t]}\; \norm{f_i}_{U^2} \prod_{j\neq i} \norm{f_j}_{\mathcal{L}_2}.
\end{equation}
\end{theorem}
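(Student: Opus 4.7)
My plan is entirely Fourier-analytic. The central step is to establish, for trigonometric polynomials $f_1,\ldots,f_t$ on $G$, the identity
\begin{equation*}
\Tsol_L(f_1,\ldots,f_t) \;=\; \sum_{\psi\in\widehat{G}} \prod_{i=1}^t \widehat{f}_i(c_i\psi),
\end{equation*}
where $c_i\psi$ denotes the pull-back character $x\mapsto\psi(c_ix)$ of $G$. Once this identity is in place, the inequality \eqref{U2C} reduces to an $\ell^p$-estimate on the Fourier side that extends to arbitrary $\mathcal{L}_2$ functions by density.

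To prove the identity I observe that $L$, viewed as the map $G^t\to G$, $(x_1,\ldots,x_t)\mapsto c_1x_1+\cdots+c_tx_t$, is a surjective continuous homomorphism; indeed its restriction to the first coordinate is already surjective since $c_1$ gives a surjective dilation on $G$. Hence $G^t/\ker L\cong G$, and by Pontryagin duality the annihilator $(\ker L)^\perp\subset\widehat{G}^t$ coincides with the image of the dual map $\widehat{L}\colon\widehat{G}\to\widehat{G}^t$, $\psi\mapsto(c_1\psi,\ldots,c_t\psi)$. This map is injective because each individual map $\psi\mapsto c_i\psi$, being dual to a surjective dilation, is injective. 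Testing the claimed identity on characters $f_i=\chi_i$ then reduces both sides to $1$ or $0$ according to whether $(\chi_1,\ldots,\chi_t)\in(\ker L)^\perp$---the left side by the standard computation of $\int_{\ker L}\chi_1\otimes\cdots\otimes\chi_t\,\ud\mu_L$, the right side by the injectivity just noted---and linearity extends the identity to trigonometric polynomials.

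With the identity secured, I isolate the index $i$ of interest in \eqref{U2C}; by relabeling it suffices to treat $i=1$. The triangle inequality followed by Cauchy--Schwarz, grouping $\{1,2\}$ against $\{3,\ldots,t\}$, gives
\begin{equation*}
\Bigl|\sum_\psi\prod_{i=1}^t\widehat{f}_i(c_i\psi)\Bigr|
\;\leq\;
\Bigl(\sum_\psi|\widehat{f}_1(c_1\psi)|^2|\widehat{f}_2(c_2\psi)|^2\Bigr)^{1/2}
\Bigl(\sum_\psi\prod_{i=3}^t|\widehat{f}_i(c_i\psi)|^2\Bigr)^{1/2}.
\end{equation*}
A further Cauchy--Schwarz on the first factor, combined with the Plancherel-type identity $\norm{f}_{U^2}^4=\sum_\chi|\widehat{f}(\chi)|^4$ and the injectivity of $\psi\mapsto c_i\psi$, yields the bound $\norm{f_1}_{U^2}\norm{f_2}_{U^2}$. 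For the second factor, extracting the $\ell^\infty$ estimate $|\widehat{f}_i(\chi)|\leq\int_G|f_i|\,\ud\mu\leq\norm{f_i}_{\mathcal{L}_2}$ on all but one Fourier transform, and applying Parseval (again using injectivity) to the remaining one, gives $\prod_{i=3}^t\norm{f_i}_{\mathcal{L}_2}$. Combining and using $\norm{f_2}_{U^2}\leq\norm{f_2}_{\mathcal{L}_2}$ produces the desired bound for $i=1$. Finally, since the right-hand side of the bound is continuous in each $f_j$ in the $\mathcal{L}_2$ topology, and trigonometric polynomials are dense in $\mathcal{L}_2(G)$, the inequality extends to arbitrary $f_1,\ldots,f_t\in\mathcal{L}_2(G)$.

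The most delicate part, I expect, will be the rigorous justification of the Fourier identity in the compact-abelian setting---the Pontryagin-dual identification of $(\ker L)^\perp$ and the initial handling of $\mu_L$, which may be singular with respect to $\mu_{G^t}$. The hypothesis that every $c_i$ gives a surjective dilation enters precisely through the injectivity of the dual maps $\psi\mapsto c_i\psi$: this is needed both to describe $(\ker L)^\perp$ correctly and to control the sub-Parseval and sub-$U^2$ sums $\sum_\psi|\widehat{f}_i(c_i\psi)|^p$ by their global counterparts on $\widehat{G}$.
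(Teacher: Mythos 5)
Your Fourier-side argument is fine as far as it goes: the identity you state is exactly Proposition \ref{TLinversion} of the paper, your verification on characters (via the identification of $(\ker L)^\perp$ with the image of $\psi\mapsto(c_1\psi,\ldots,c_t\psi)$) is correct, and the two applications of Cauchy--Schwarz on the dual side, together with $\norm{f}_{U^2}^4=\sum_\chi|\fhat(\chi)|^4$, $\sup_\chi|\fhat(\chi)|\le\norm{f}_{\mathcal{L}_2}$ and the injectivity of $\psi\mapsto c_i\psi$, do give \eqref{U2C} for trigonometric polynomials. Note this is a genuinely different route from the paper, which works entirely in physical space: it parametrizes $\ker L$ by $G^{t-1}$ via $(y_1,\ldots,y_{t-1})\mapsto(c_ty_1,\ldots,c_ty_{t-1},-c_1y_1-\cdots-c_{t-1}y_{t-1})$ and Lemma \ref{cts-surj-hom} (equation \eqref{param}), and then applies Cauchy--Schwarz twice to the resulting integral, for arbitrary $\mathcal{L}_2$ functions directly.

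The genuine gap is in your final sentence, the passage from trigonometric polynomials to general $f_i\in\mathcal{L}_2(G)$. Continuity of the \emph{right-hand side} in $\mathcal{L}_2$ is not enough: you must also know that $\Tsol_L(f_{1,n},\ldots,f_{t,n})\to\Tsol_L(f_1,\ldots,f_t)$ along your approximating sequence, i.e.\ some a priori $\mathcal{L}_2$-continuity of $\Tsol_L$ itself --- and indeed, before that, that $\Tsol_L(f_1,\ldots,f_t)$ is even well defined and finite for general $\mathcal{L}_2$ functions, since $\mu_L$ can be singular with respect to $\mu_{G^t}$ and the $f_i$ are only defined $\mu_G$-almost everywhere (you flag this yourself but do not resolve it). The natural a priori bound one would invoke here is \eqref{L2control}, $|\Tsol_L(g_1,\ldots,g_t)|\le\prod_i\norm{g_i}_{\mathcal{L}_2}$, but in the paper that is a \emph{consequence} of Theorem \ref{U2control}; using it to complete your density step is circular. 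This is precisely why the paper proves the theorem first by the physical-space argument (valid at once for all $\mathcal{L}_2$ functions) and only then uses \eqref{L2control} to justify the density step in the proof of Proposition \ref{TLinversion} --- your plan reverses that logical order without supplying a substitute. To patch it you need an independent argument giving well-definedness and, say, $\mathcal{L}_2$- (or truncation-plus-$\mathcal{L}_\infty$-) continuity of $\Tsol_L$; the standard way is to show that the projection of $\mu_L$ onto each coordinate is $\mu_G$, or to use the full parametrization \eqref{param} --- at which point you have essentially reconstructed the paper's proof and the Fourier detour is no longer needed.
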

Here $[t]$ denotes the set of integers $\{1,2,...,t\}$.
\begin{proof}
Writing $L(x) = c_1 x_1 + \cdots + c_t x_t$, by assumption we have $c_i \cdot G = G$ for each $i\in [t]$. We shall prove the upper bound in 
\eqref{U2C} just for $i=t$, which is sufficient by symmetry.

By the assumption on $c_j$-dilations, the map $G^{t-1}\to \ker L$, \[(y_1,...,y_{t-1})\mapsto (c_ty_1,c_ty_2,\ldots,c_ty_{t-1},-c_1y_1-c_2y_2-\cdots-c_{t-1}y_{t-1})\]
is surjective, so by Lemma \ref{cts-surj-hom} we have
\begin{equation}\label{param}
\Tsol_L(f_1,\ldots,f_t)=\int_{G^{t-1}} f_1(c_ty_1)\cdots f_{t-1}(c_ty_{t-1})f_t(-c_1y_1-\cdots-c_{t-1}y_{t-1})\ud y_1\cdots \ud y_{t-1}.
\end{equation}
We can use this expression to prove the desired upper bound by an argument which is standard in the setting of finite abelian groups, consisting
in two applications of the Cauchy-Schwarz inequality. We include the details in the present setting for completeness.
First we apply Fubini's theorem and the Cauchy-Schwarz inequality over variables $y_2,...,y_{t-1}$ to obtain
\begin{align*}
|\Tsol_L(f_1,\ldots,f_t)|^2 &\leq \int_{G^{t-2}} |f_2(c_t y_2) \cdots f_{t-1}(c_t y_{t-1})|^2 \ud y_2\cdots \ud y_{t-1} \\
&\quad \cdot \int_{G^{t-2}}\left|\int_G f_1(c_ty_1)f_t(-c_1y_1-\cdots-c_{t-1}y_{t-1}) \ud y_1 \right|^2\ud y_2\cdots \ud y_{t-1}.
\end{align*}
Applying Lemma \ref{cts-surj-hom} to $c_t$-dilation and $(y_2,\ldots,y_{t-1})\mapsto x=-c_2y_2-c_3y_3-\cdots-c_{t-1}y_{t-1}$, the right hand side above is found to equal
\[\norm{f_2}_{\mathcal{L}_2}^2\cdots \norm{f_{t-1}}_{\mathcal{L}_2}^2 \int_G\left|\int_G f_1(c_ty)f_t(x-c_1y) \ud y \right|^2\ud x.\]
By Fubini's theorem the integral here equals
\[\int_{G^2} f_1(c_ty)\overline{f_1(c_ty')} \left(\int_G f_t(x-c_1y)\overline{f_t(x-c_1y')} \ud x\right)\ud y\ud y'.\]
Applying Cauchy-Schwarz over $(y,y')$, Lemma \ref{cts-surj-hom} for dilations by $c_1$ and by $c_t$, and Fubini's theorem again, we find that this integral is at most $\norm{f_1}_{\mathcal{L}_2}^2$ times
\[
\left(\int_{G^4} f_t(x+y)\,\overline{f_t(x+y')}\, \overline{f_t(x'+y)}\, f_t(x'+y') \ud x\ud x'\ud y\ud y'\right)^{1/2}=\norm{f_t}_{U^2}^2.\qedhere
\]
\end{proof}
Note that, since the $\mathcal{L}_2$ norm dominates the $U^2$ norm, \eqref{U2C} implies immediately
\begin{equation}\label{L2control}
|\Tsol_L(f_1,\ldots,f_t)|\leq \prod_i \norm{f_i}_{\mathcal{L}_2}.
\end{equation}

We now turn to Fourier-analytic aspects of $\Tsol_L$. Let us first settle on some notation. For a compact abelian group $G$ and any real $r\geq 1$ we
denote by $\mathcal{L}_r(G)$ the Banach space of complex-valued functions on $G$ with integrable $r$th power. The implicit measure here
is the normalized Haar measure on $G$. The Pontryagin dual $\widehat{G}$ of $G$ is a discrete group, so by $\mathcal{L}_r(\widehat{G})$ we denote 
the analogous Banach space but with the implicit measure being the counting measure on $\widehat{G}$, which assigns value 1 to each singleton.

We shall use the Fourier transform on $\mathcal{L}_2(G)$ (or Plancherel transform). By the compactness of $G$ we have $\mathcal{L}_2(G)\subset\mathcal{L}_1(G)$.
If $f\in \mathcal{L}_1(G)$ then the Fourier transform $\widehat{f}$ is defined for $\gamma\in \widehat{G}$ by
$\widehat{f}(\gamma)=\int_G f(x)\overline{\gamma(x)}\ud\mu_{G}(x)$.
If $\widehat{f}$ is also in $\mathcal{L}_1(\widehat{G})$ then we have the Fourier inversion formula
\begin{equation}\label{inversion}
f(x)=\int_{\widehat G} \widehat{f}(\gamma)\gamma(x) \ud\mu_{\widehat{G}}(\gamma).
\end{equation}
Plancherel's theorem gives us that for any $f\in \mathcal{L}_2(G)$, $\widehat{f}\in\mathcal{L}_2(\widehat{G})$ and $\norm{f}_{\mathcal{L}_2(G)}=\norm{\widehat{f}}_{\mathcal{L}_2(\widehat{G})}$.

The following expression of $\Tsol_L$ in terms of the Fourier transforms $\widehat{f_i}$ will be used in the next section.

\begin{proposition}\label{TLinversion}
Let $L = c_1 x_1 + \cdots + c_t x_t$ be a linear form and let $G$ be a compact abelian group such that
$c_i\cdot G=G$ for every $i$. Then for any $f_1,\ldots,f_t\in\mathcal{L}_2(G)$ we have
\begin{equation}\label{FI}
\Tsol_L(f_1,\ldots,f_t)= \int_{\widehat{G}} \widehat{f_1} (\gamma^{c_1}) \cdots \widehat{f_t} (\gamma^{c_t}) \ud\mu_{\widehat{G}}(\gamma).
\end{equation}
\end{proposition}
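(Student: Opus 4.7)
The plan is to prove \eqref{FI} first for characters $f_i = \gamma_i \in \widehat{G}$, extend by $t$-multilinearity to trigonometric polynomials, and then use density of trigonometric polynomials in $\mathcal{L}_2(G)$ together with continuity of both sides to pass to all $f_i \in \mathcal{L}_2(G)$.

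\emph{Verification on characters.} When each $f_i = \gamma_i$, the Fourier coefficient $\widehat{f_i}(\chi)$ equals $1$ if $\chi = \gamma_i$ and $0$ otherwise, so the right-hand side of \eqref{FI} counts the $\chi \in \widehat{G}$ satisfying $\chi^{c_i} = \gamma_i$ for every $i$; the hypothesis $c_i\cdot G = G$ makes the dual map $\chi \mapsto \chi^{c_i}$ injective on $\widehat{G}$, so this count is $0$ or $1$. On the left, $\Tsol_L(\gamma_1,\ldots,\gamma_t)$ is the integral of the character $x \mapsto \prod_i \gamma_i(x_i)$ of $G^t$ against the Haar measure on the closed subgroup $\ker L$, which by orthogonality equals $1$ or $0$ according as this character lies in the annihilator $(\ker L)^\perp \subseteq \widehat{G}^t$. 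Since $L : G^t \to G$ is a continuous surjection (any single $c_i\cdot G = G$ suffices), dualising the short exact sequence $0 \to \ker L \to G^t \xrightarrow{L} G \to 0$ identifies $(\ker L)^\perp$ with the image of the injection $\chi \mapsto (\chi^{c_1},\ldots,\chi^{c_t})$, so the two conditions coincide.

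\emph{Extension by density.} Both sides of \eqref{FI} are $t$-multilinear in $(f_1,\ldots,f_t)$, so the identity immediately passes from characters to trigonometric polynomials. To extend to $\mathcal{L}_2(G)^t$, I note that trigonometric polynomials are dense in $\mathcal{L}_2(G)$ by Plancherel and argue that both sides are continuous multilinear forms on $\mathcal{L}_2(G)^t$. Continuity of the left-hand side is the bound \eqref{L2control}. For the right-hand side, H\"older's inequality on the discrete counting-measure space $\widehat{G}$ yields
\[
\sum_{\chi \in \widehat{G}} \prod_{i=1}^t \abs{\widehat{f_i}(\chi^{c_i})} \;\leq\; \prod_{i=1}^t \Bigl(\sum_{\chi \in \widehat{G}} \abs{\widehat{f_i}(\chi^{c_i})}^t\Bigr)^{1/t},
\]
and for $t \geq 2$ the counting-measure containment $\ell^2 \subset \ell^t$ bounds each factor by $\bigl(\sum_\chi \abs{\widehat{f_i}(\chi^{c_i})}^2\bigr)^{1/2} \leq \norm{f_i}_{\mathcal{L}_2(G)}$, the last inequality following from injectivity of $\chi\mapsto\chi^{c_i}$ and Plancherel. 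Thus the right-hand side is bounded by $\prod_i\norm{f_i}_{\mathcal{L}_2}$, giving simultaneously absolute convergence and continuity.

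The main obstacle I expect is this absolute summability on the right-hand side: it relies both on the assumption $t \geq 2$ and on the reversal of the usual $\ell^p$ nesting for counting measure, without which the right-hand side would not even obviously make sense for arbitrary $f_i \in \mathcal{L}_2(G)$. By contrast, the character computation is routine Pontryagin duality applied to the surjection $L$.
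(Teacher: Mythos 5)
Your proof is correct, but it reaches the identity by a different route from the paper. You first verify \eqref{FI} on characters, using orthogonality of characters on the compact group $\ker L$ together with the duality identification $(\ker L)^\perp=\{\chi\circ L:\chi\in\Ghat\}=\{(\chi^{c_1},\ldots,\chi^{c_t}):\chi\in\Ghat\}$ (valid since $L:G^t\to G$ is a surjective continuous homomorphism, so $G^t/\ker L\cong G$ topologically by compactness), and then extend by multilinearity and $\mathcal{L}_2$-density, with \eqref{L2control} controlling the left-hand side and your H\"older-plus-$\ell^2\subset\ell^t$ bound (using injectivity of $\gamma\mapsto\gamma^{c_i}$ and Plancherel) controlling the right. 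The paper instead first proves \eqref{FI} for all $f_i$ with $\widehat{f_i}\in\mathcal{L}_1(\Ghat)$: it forms the continuous function $F(x+H)=\int_H f_1\otimes\cdots\otimes f_t(x+y)\ud\mu_H(y)$ on $G^t/H$ with $H=\ker L$, computes $\widehat{F}(\gamma\circ L)=\widehat{f_1}(\gamma^{c_1})\cdots\widehat{f_t}(\gamma^{c_t})$ via the quotient-integration formula from Rudin, and evaluates $F(0)$ by Fourier inversion on the quotient; the passage to general $\mathcal{L}_2$ functions is then the same kind of approximation argument you use, resting on the same two ingredients (\eqref{L2control} on the physical side, injectivity of the dual dilations plus Plancherel on the Fourier side). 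Your version buys elementarity: the core identity needs only orthogonality and Pontryagin duality for the annihilator, with no appeal to Weil's quotient formula or to Fourier inversion and summability of $\widehat F$; the paper's version establishes the formula directly for the larger class of functions with absolutely summable Fourier transform before taking limits, which fits naturally with the Fourier-expansion viewpoint used later in the transference argument. One small point worth making explicit: your H\"older step (and indeed the paper's use of \eqref{L2control}, which is derived from Theorem \ref{U2control} for $t\geq 3$) implicitly requires $t\geq 2$; this is harmless since the proposition is only ever invoked for forms in at least three variables, but you might state the restriction when writing it up.
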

\begin{proof}
Let $H=\ker L$. First we prove \eqref{FI} for $f_1,\ldots,f_t$ having Fourier transforms in $\mathcal{L}_1(\widehat{G})$. In this case each $f_i$ is continuous (by \eqref{inversion}), and it follows that the function $F$ defined on $G^t/H$ by $F(x+H)= \int_H f(x+y)\ud\mu_H(y)$ is continuous on $G^t/H$. Note that $\Tsol_L(f_1,\ldots,f_t)=F(0)$. Let us consider the Fourier coefficients $\widehat{F}(\chi)$, $\chi\in \widehat{G^t/H}\cong H^\perp$. On one hand, it follows from $c_i\cdot G=G$ that $\gamma\mapsto \chi=\gamma\circ L$ is an isomorphism $\widehat{G}\to H^{\perp}$. On the other hand, using a standard formula for integration on quotient groups \cite[2.7.3 (2)]{RudLCA} one checks that $
\widehat{F}(\gamma\circ L)=\int_{G^t}f(x)\overline{\gamma\circ L(x)} \ud\mu_{G^t}(x)=\widehat{f_1}(\gamma^{c_1})\cdots \widehat{f_t}(\gamma^{c_t})$.
Therefore, provided $\widehat{F}$ is in $\mathcal{L}_1(H^\perp)$, we can apply Fourier inversion to conclude that
\[
\Tsol_L(f_1,\ldots,f_t)=F(0)=\int_{H^\perp} \widehat{F}(\chi) \ud\mu_{H^\perp}(\chi) =\int_{\widehat{G}} \widehat{f_1}(\gamma^{c_1})\cdots \widehat{f_t}(\gamma^{c_t})\ud\mu_{\widehat{G}}(\gamma).
\]
The function $\gamma\mapsto \widehat{f_1}(\gamma^{c_1})\cdots \widehat{f_t}(\gamma^{c_t})$ is shown to be indeed in $\mathcal{L}_1(\widehat{G})$ using Cauchy-Schwarz, Plancherel's theorem, and the fact that $\gamma\mapsto \gamma^{c_i}$ is injective (i.e. that $c_i\cdot G=G$).

Now let $f_i\in\mathcal{L}_2(G)$, so $\widehat{f_i}\in\mathcal{L}_2(\widehat{G})$ with $\norm{\widehat{f_i}}_{\mathcal{L}_2(\widehat{G})}=\norm{f_i}_{\mathcal{L}_2(G)}$. For each $i$ we have a sequence $\widehat{g}_{i,n}$ in
$\mathcal{L}_1(\widehat{G})$ with $\widehat{g}_{i,n}\to \widehat{f_i}$ in the $\mathcal{L}_2(\widehat{G})$ norm, whence also $g_{i,n}\to f_i$ in $\mathcal{L}_2(G)$. One then uses multilinearity of $\Tsol_L$, \eqref{L2control}, and Cauchy-Schwarz on $\mathcal{L}_2(\widehat{G})$ to show that
\begin{eqnarray*}
\Tsol_L(f_1,\ldots,f_t)&=&\lim_{n\to\infty} \Tsol_L(g_{1,n},g_{2,n},\ldots,g_{t,n})\\
&=&\lim_{n\to\infty}\int_{\widehat{G}} \widehat{g}_{1,n}(\gamma^{c_1})\cdots \widehat{g}_{t,n}(\gamma^{c_t}) \ud\mu_{\widehat{G}}(\gamma)
=\int_{\widehat{G}} \widehat{f}_1(\gamma^{c_1})\cdots \widehat{f}_t(\gamma^{c_t})\ud\mu_{\widehat{G}}(\gamma).
\end{eqnarray*}
\end{proof}
We close this section with the observation that for the proof of Theorem \ref{mainres} we can assume that each $L\in\mathcal{F}$ has coprime coefficients. This is justified by the following lemma.

\begin{lemma}
Let $\mathcal{F}$ be a finite family of linear forms, and let $\mathcal{F}'$ be obtained by multiplying each $L\in\mathcal{F}$ by some non-zero integer $n_L$. Let $G$ be a compact abelian group such that $n_L\cdot G=G$ for every $L$. Then $\md_\mathcal{F}(G)= \md_{\mathcal{F}'}(G)$.
\end{lemma}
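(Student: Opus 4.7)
The plan is to establish the two inequalities $\md_{\mathcal{F}'}(G) \leq \md_\mathcal{F}(G)$ and $\md_\mathcal{F}(G) \leq \md_{\mathcal{F}'}(G)$ separately. The first is essentially definitional: in any abelian group, $L(x) = 0$ implies $n_L L(x) = 0$, so every $\mathcal{F}'$-free set is a fortiori $\mathcal{F}$-free. Taking the supremum over measurable $\mathcal{F}'$-free sets gives $\md_{\mathcal{F}'}(G) \leq \md_\mathcal{F}(G)$ immediately, and this direction uses no hypothesis on $G$.

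For the reverse inequality I would start from a measurable $\mathcal{F}$-free set $A \subset G$ and construct a measurable $\mathcal{F}'$-free set $B \subset G$ of the same measure by pulling $A$ back through a suitable dilation. Let $N = \prod_{L \in \mathcal{F}} n_L$; since each $n_L$-dilation is surjective on $G$, so is the $N$-dilation $\psi : x \mapsto Nx$, and this is a continuous surjective homomorphism. Set $B = \psi^{-1}(A)$, which is measurable because $\psi$ is continuous, and apply Lemma \ref{cts-surj-hom} to get $\mu(B) = \mu(A)$. To verify that $B$ is $\mathcal{F}'$-free, suppose for contradiction that some $L \in \mathcal{F}$ and some $b = (b_1, \ldots, b_t) \in B^t$ satisfy $n_L L(b) = 0$. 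Since $n_L$ divides $N$, this yields
\[
0 \;=\; N L(b) \;=\; L(Nb_1, \ldots, Nb_t),
\]
and each point $Nb_i = \psi(b_i)$ lies in $A$, contradicting the $L$-freeness of $A$. Hence $B$ is $\mathcal{F}'$-free, and taking the supremum over $A$ gives $\md_\mathcal{F}(G) \leq \md_{\mathcal{F}'}(G)$.

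I do not anticipate any serious obstacle here. The role of the hypothesis $n_L \cdot G = G$ is precisely to ensure that the pull-back map $\psi$ is a continuous surjective homomorphism, so that Lemma \ref{cts-surj-hom} applies and no measure is lost when transferring an $\mathcal{F}$-free set to an $\mathcal{F}'$-free one. Combining the two inequalities yields the claimed equality.
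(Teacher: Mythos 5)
Your proposal is correct and follows essentially the same route as the paper: the trivial inclusion of $\mathcal{F}'$-free sets among $\mathcal{F}$-free sets in one direction, and in the other the pull-back of an $\mathcal{F}$-free set under dilation by $m=\prod_{L\in\mathcal{F}} n_L$, using that $n_L L(b)=0$ forces $L(mb_1,\ldots,mb_t)=0$ and that the surjective dilation preserves Haar measure via Lemma \ref{cts-surj-hom}.
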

\begin{proof}
It is clear that any $nL$-free set is also $L$-free, so $\md_{\mathcal{F}'}(G) \leq \md_\mathcal{F}(G)$. On the other hand, if $A$ is $L$-free then $n^{-1}A$ is $nL$-free for any $n$, for if $x_i \in n^{-1}A$ and $(n c_1) x_1 + \cdots + (n c_t) x_t = 0$ then $(n x_1, \ldots, n x_t) \in A^t$ is a solution. If in addition $n \cdot G = G$ then $\mu(n^{-1}A) = \mu(A)$. These properties imply easily that if $A$ is $\mathcal{F}$-free then $m^{-1} A$ is $\mathcal{F}'$-free, where $m = \prod_{L \in \mathcal{F}} n_L$, and so $\md_\mathcal{F}(G) \leq \md_{\mathcal{F}'}(G)$.
\end{proof}

\section{Transference}\label{Transference}
In proving Theorem \ref{mainres} we shall need to move sets between the groups $\Zmod{p}$ and $\T$. A result essentially allowing us to do so was established in Chapter 4 of \cite{OlofTh} by extending ideas of Croot \cite{croot}, though for simplicity it was assumed there that all linear forms considered had at least one coefficient equal to $1$. In this section we shall review this result and also show how to eliminate the assumption on the coefficients, thus obtaining Proposition \ref{transfer} and, as our main application, Corollary \ref{transfer-cor}.

Central to the results we are about to discuss is the notion of Freiman isomorphism.

\begin{defn} Let $k \geq 2$ and let $A \subset G$, $B \subset H$ be subsets of two abelian groups. We call a function $\varphi : A \to B$ a \emph{Freiman $k$-isomorphism} if it is a bijection and
\[ a_1 + \cdots + a_k = a_{k+1} + \cdots + a_{2k} \Longleftrightarrow \varphi(a_1) + \cdots + \varphi(a_k) = \varphi(a_{k+1}) + \cdots + \varphi(a_{2k}) \]
for all $a_i \in A$.
\end{defn}

Thus Freiman $k$-isomorphisms, or just $k$-isomorphisms for short, preserve additive relations of length at most $k$. The main result of \cite[Chapter 4]{OlofTh} was that if one can find $k$-isomorphisms between small subsets of the duals of $G$ and $H$, then one can model functions on one group by functions on the other, in a particular sense. That sense uses the following notion of \emph{admissibility} of a linear form.

\begin{defn}\label{GenAdm}
Let $G$ be a compact abelian group and let $L = c_1 x_1 + \cdots + c_t x_t$ be a linear form in at least three variables with coprime coefficients.
We say that $L$ is \emph{$G$-admissible} if $c_i \cdot G = G$ for each coefficient $c_i$. Since the $c_i$ are coprime there are integers $n_i$ such that $n_1 c_1 + \cdots + n_t c_t = 1$; we call the minimum value of $\abs{n_1} + \cdots + \abs{n_t}$ over such integers the \emph{multiplier-height} of $L$ and we denote this quantity by $h(L)$. We shall say that $L$ is \emph{$k$-admissible} if
\begin{equation}\label{klb}
k \geq \max\{ h(L), \abs{c_1}, \abs{c_2}, \ldots, \abs{c_t} \}.
\end{equation}
We give the obvious meaning to $(G,k)$-admissibility and, if $H$ is another compact abelian group, to $(G,H,k)$-admissibility.
\end{defn}

One more definition is needed in order to state the transference result.

\begin{defn}
Let $G$ and $H$ be abelian groups. We say that \emph{$H$ can Freiman $(n,k)$-model $G$} if for any subset $A$ of $G$ of size $|A|\leq n$ there exists a Freiman $k$-isomorphism from $A$ to a subset of $H$.
\end{defn}

\begin{proposition}\label{transfer}
Let $\epsilon>0$ and $h$ be a positive integer. Suppose $G$ and $H$ are two compact abelian groups such that,
for some $k \geq k_0(\epsilon)$ and all $n \leq C(\epsilon)^h$, $\widehat{H}$ can Freiman $(n,k)$-model $\widehat{G}$. Let $f : G \to [0,1]$ be a measurable function with $\int_G f = \alpha$. Then there is a continuous function $g : H \to [0,1]$ with $\int_H g = \alpha$ such that
\[ \abs{\Tsol_L(f) - \Tsol_L(g)} \leq t \epsilon \alpha^{t-2} \]
for any $(G,H,k)$-admissible linear form $L$ in $t \geq 3$ variables with multiplier-height at most $h$.
\end{proposition}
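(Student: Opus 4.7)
The strategy is standard Fourier-analytic transference in the spirit of Croot. My plan is to decompose $f$ on $G$ into a ``structured'' part $f_\Lambda$ (whose Fourier transform is supported on a small spectrum $\Lambda\subset\widehat{G}$) plus a $U^2$-small remainder, to transport the structured part via a Freiman model $\phi$ into a finite Fourier sum $\tilde g$ on $H$ with $\Tsol_L(\tilde g)=\Tsol_L(f_\Lambda)$, and finally to replace $\tilde g$ by a $[0,1]$-valued continuous function $g$ with mean $\alpha$ whose solution measures differ from those of $\tilde g$ by only a controlled amount.

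For the spectral step, fix a threshold $\eta=\eta(\epsilon,\alpha,t)$ and set $\Lambda=\{\gamma\in\widehat{G}:|\widehat{f}(\gamma)|\geq\eta\}$, which by Parseval has size at most $\alpha/\eta^2$. Writing $\widehat{f_\Lambda}=\widehat{f}\cdot\mathbf{1}_\Lambda$ and $f_{\Lambda^c}=f-f_\Lambda$, one has $\norm{f_{\Lambda^c}}_{U^2}^4=\sum_{\gamma\notin\Lambda}|\widehat{f}(\gamma)|^4\leq\eta^2\norm{f}_{\mathcal{L}_2}^2\leq\eta^2\alpha$. Expanding $\Tsol_L(f)-\Tsol_L(f_\Lambda)$ multilinearly into $2^t-1$ terms, each containing at least one $f_{\Lambda^c}$-slot, and placing the $U^2$-norm from Theorem \ref{U2control} on such a slot (with $\norm{\cdot}_{\mathcal{L}_2}\leq\alpha^{1/2}$ in the other slots) bounds the total by $C(t)\,\eta^{1/2}\alpha^{(2t-1)/4}$, which is at most $\tfrac{1}{3}t\epsilon\alpha^{t-2}$ once $\eta$ is small enough. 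For the transfer, thicken $\Lambda$ to $\Lambda^*=\Lambda\cup\bigcup_{i=1}^t c_i^{-1}\Lambda$ to accommodate the dilations in Proposition \ref{TLinversion}; since each $c_i$-dilation is a bijection of $\widehat{G}$ (as $c_i\cdot G=G$), we have $|\Lambda^*|\leq(t+1)|\Lambda|\leq C(\epsilon)^h$ after enlarging the constant $C$, so the hypothesis supplies a Freiman $k$-isomorphism $\phi:\Lambda^*\to\widehat{H}$. Since $k\geq\max_i|c_i|$ by $k$-admissibility, the length-$|c_i|$ additive relation for $c_i\gamma$ is preserved, giving $\phi(c_i\gamma)=c_i\phi(\gamma)$ whenever $\gamma,c_i\gamma\in\Lambda^*$ (using $0\in\Lambda^*$ for padding when $|c_i|<k$). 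Define $\tilde g(y)=\sum_{\gamma\in\Lambda}\widehat{f}(\gamma)\,\phi(\gamma)(y)$, a continuous function on $H$ with mean $\widehat{f}(0)=\alpha$ (take $\eta<\alpha$ so that $0\in\Lambda$). Applying Proposition \ref{TLinversion} on $H$ and the identity $\xi=\sum_i n_i(c_i\xi)$ together with $\sum_i|n_i|\leq h\leq k$ and the Freiman property, one shows that any contributing $\xi\in\widehat{H}$ must lie in $\phi(\Lambda^*)$ and that the Fourier values transfer verbatim; hence $\Tsol_L(\tilde g)=\Tsol_L(f_\Lambda)$.

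The main obstacle is the final rounding to $[0,1]$. The function $\tilde g$ has the right mean and satisfies $\norm{\tilde g-\alpha}_{\mathcal{L}_2}^2\leq\alpha(1-\alpha)$, but it need not take values in $[0,1]$, and a naive truncation $\max(0,\min(1,\tilde g))$ induces an $\mathcal{L}_2$-error of order $\alpha^{1/2}$, which is too large once $t\geq 4$ (it would give an error $\sim\alpha^{t/2}$ in $\Tsol_L$ rather than $\alpha^{t-2}$). Instead, I would first smooth $\tilde g$ by a small convolution on $H$---for instance with the normalised indicator of a small Bohr set or a Fej\'er-type kernel---which alters the Fourier transform only at frequencies outside $\phi(\Lambda)$ and hence, by the same Theorem \ref{U2control} argument as in the first step, changes each $\Tsol_L$ by $o(\alpha^{t-2})$ while bringing the function close enough to bounded that a safe truncation into $[0,1]$ can be performed; a final small additive adjustment restores the mean exactly to $\alpha$. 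Combining the three steps yields the required inequality $|\Tsol_L(f)-\Tsol_L(g)|\leq t\epsilon\alpha^{t-2}$ uniformly over the admissible family.
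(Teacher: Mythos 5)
Your overall architecture (regularize on $G$, transfer the structured part through a Freiman isomorphism of the spectrum, then repair the range on $H$) is the same as the paper's, but two of your three steps have genuine problems. In the transfer step, the paper's decisive device is to take the model set to be the $h$-fold product set $Q=R^h$ of a symmetric spectrum $R$ containing the identity: its size is at most $C(\epsilon)^h$ \emph{independently of the form $L$}, and the Bezout identity $\gamma=(\gamma^{c_1})^{n_1}\cdots(\gamma^{c_t})^{n_t}$ with $\sum_i\abs{n_i}\leq h$ places every index contributing to $\Tsol_L$ (on either side) inside $Q$, resp.\ $\varphi(Q)$. Your thickening $\Lambda^{*}=\Lambda\cup\bigcup_i c_i^{-1}\Lambda$ does not do this job: on the $\Hhat$ side the Bezout identity only shows that a contributing frequency lies in the product set $\varphi(\Lambda)^{n_1}\cdots\varphi(\Lambda)^{n_t}$, and to rewrite this as $\varphi(\Lambda^{n_1}\cdots\Lambda^{n_t})$ and pull it back you need $\varphi$ to be defined, and Freiman-isomorphic, on a set containing the $h$-fold product set of $\Lambda$ --- which $\Lambda^{*}$ does not contain. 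Moreover $\Lambda^{*}$ depends on the coefficients of one particular $L$, whereas a single $g$ must serve \emph{all} admissible forms of height at most $h$ simultaneously (and $t$ is not bounded in terms of $\epsilon$ and $h$, so ``enlarging the constant'' to absorb $(t+1)\abs{\Lambda}$ is not legitimate); the exponent $h$ in the hypothesis $n\le C(\epsilon)^h$ exists precisely to pay for the product set $R^h$. There is also a quantitative flaw in your first step: bounding the passive slots by $\norm{\cdot}_{\mathcal{L}_2}\leq\alpha^{1/2}$ gives $\eta^{1/2}\alpha^{(2t-1)/4}$, and to beat $t\epsilon\alpha^{t-2}$ you must take $\eta$ depending on $\alpha$ and on $t$; then $\abs{\Lambda}\leq\alpha/\eta^{2}$ is no longer bounded in terms of $\epsilon$ alone (already for $t\geq5$), and no single $\eta$ works for all $t$. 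The repair is to use $\norm{\fhat}_{\infty}\leq\alpha$ on $t-3$ of the passive slots in the Fourier representation of Proposition \ref{TLinversion} (Cauchy--Schwarz on the remaining two), giving an error of order $\eta\alpha^{t-2}$ per slot with $\abs{\Lambda}=O(\epsilon^{-2})$.

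The most serious gap is the final rounding step, which you correctly identify as the main obstacle but do not resolve. The function $\tilde g$ is a trigonometric polynomial whose Fourier transform is supported exactly on $\varphi(\Lambda)$, so convolving it with a Bohr-set indicator or Fej\'er-type kernel either multiplies precisely those coefficients (ruining the identity $\Tsol_L(\tilde g)=\Tsol_L(f_\Lambda)$ you just arranged, and in any case giving no reason for the values to move into $[0,1]$), or, if the kernel's transform is $1$ on $\varphi(\Lambda)$, does nothing at all; a nonnegative averaging kernel cannot tame the overshoot of a fixed trigonometric polynomial, so no ``safe truncation'' becomes available. The paper appeals to \cite{OlofTh} for this step, and it is exactly here that the unused hypothesis $k\geq k_0(\epsilon)$ enters: a Freiman isomorphism of sufficiently high order preserves enough additive relations among the frequencies that the transferred function inherits distributional information from the $[0,1]$-valued regularized function $f'$ on $G$, which is what makes it possible to produce $g':H\to[0,1]$ with $\int_H g'=\int_H g$ and $\abs{\Tsol_L(g)-\Tsol_L(g')}\leq t\epsilon\alpha^{t-2}/2$. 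Your argument never invokes $k\geq k_0(\epsilon)$, and your structured part $f_\Lambda$ is itself not $[0,1]$-valued (unlike the paper's $f'$), so there is no bounded function on the $G$ side against which the value distribution of $\tilde g$ could even be compared. As written, the rounding step is a missing idea, not a routine smoothing.
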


The main difference between this proposition and \cite[Proposition 4.2.8]{OlofTh} lies in the parameter $h$: since the latter result only dealt with forms where at least one coefficient was equal to $1$ one could always take $h = 1$. The purpose of the rest of this section is to indicate the proof of \cite[Proposition 4.2.8]{OlofTh} and show how one may adapt it to take the parameter $h$ into account.

\emph{Step 1: regularize $f$.} The proof begins with a modification of a standard procedure: one replaces $f$ by a function $f' : G \to [0,1]$ with Fourier support $R \subset \widehat{G}$ of bounded size, with $R$ containing the identity $1_{\widehat{G}}$ and with $R = R^{-1}$, so that
\[ f'(x) = \sum_{\gamma \in R} \widehat{f'}(\gamma)\gamma(x) \]
for all $x \in G$ and $\abs{R} \leq C(\epsilon)$. One can do this in such a way that $\int_G f' = \int_G f$ and $\abs{\Tsol_L(f) - \Tsol_L(f')} \leq t \epsilon \alpha^{t-2}/2$ for any $G$-admissible linear form $L$ in $t$ variables.

\emph{Step 2: transfer to $H$.} By the assumption that $\widehat{H}$ can $(n,k)$-model $\widehat{G}$ one can find a $k$-isomorphism $\varphi : R \to R' \subset \widehat{H}$. One then defines $g : H \to \R$ by
\[ g(x) := \sum_{\gamma \in R} \widehat{f'}(\gamma) \varphi(\gamma)(x). \]
The key properties of $g$ used in \cite{OlofTh} were that $\int_H g = \int_G f$ and that $\Tsol_L(g) = \Tsol_L(f')$ for any $(G,H,k)$-admissible form $L$ that has at least one coefficient equal to $1$; we shall therefore need to modify this part of the argument.

\emph{Step 3: control the range of $g$.} The function $g$ produced in the previous step does not a priori take values in $[0,1]$, as we need it to do. However, one can then produce a function $g' : H \to [0,1]$ such that $\int_H g' = \int_H g$ and $\abs{\Tsol_L(g) - \Tsol_L(g')} \leq t \epsilon \alpha^{t-2}/2$ for any $H$-admissible linear form $L$ (there are several ways to do this; see \cite{OlofTh}). This step completes the proof.

\begin{proof}[Proof of Proposition \ref{transfer}]
We shall only need to modify Step 2 above. We are thus given a function $f : G \to \R$ and a symmetric set $R \subset \widehat{G}$ containing the identity $1_{\Ghat}$ such that
\[ f(x) = \sum_{\gamma \in R} \widehat{f}(\gamma)\gamma(x) \]
for all $x \in G$, and we have that $R$ has bounded size: $\abs{R} \leq C(\epsilon)$. Let $Q \subset \widehat{G}$ be the set $Q := R^h = R \cdot R \cdots R $ where $h$ is the height-parameter supplied to Proposition \ref{transfer}. Certainly we have $\abs{Q} \leq C(\epsilon)^h$, and so the modelling hypothesis guarantees the existence of a Freiman $k$-isomorphism $\varphi : Q \to Q' \subset \widehat{H}$. By translating we may assume that $\varphi(1_{\Ghat}) = 1_{\Hhat}$. Note that this means that
\[ \gamma_1^{r_1} \cdots \gamma_n^{r_n} = \gamma_1^{s_1} \cdots \gamma_n^{s_n} \Longleftrightarrow \varphi(\gamma_1)^{r_1} \cdots \varphi(\gamma_n)^{r_n} = \varphi(\gamma_1)^{s_1} \cdots \varphi(\gamma_n)^{s_n} \]
whenever $\gamma_i \in Q$ and the $r_i, s_i$ are non-negative integers with $\sum r_i \leq k$ and $\sum s_i \leq k$. Note also that $\varphi(\gamma^{-1}) = \varphi(\gamma)^{-1}$ for all $\gamma \in Q$ since $\varphi(1_{\Ghat}) = 1_{\Hhat}$. 
We now establish the following lemma, which replaces \cite[Lemma 4.6.3]{OlofTh}.

\begin{lemma}
Define $g : H \to \R$ by setting
\begin{equation} g(x) := \sum_{\gamma \in R} \widehat{f}(\gamma) \varphi(\gamma)(x) \label{g_formula} \end{equation}
for each $x \in H$. Then $\int_H g = \int_G f$, and if $L$ is a $(G,H,k)$-admissible linear form of multiplier-height at most $h$ then $\Tsol_L(g) = \Tsol_L(f)$.
\end{lemma}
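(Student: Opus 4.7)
The plan is to compute both sides of each identity via Fourier analysis on the discrete duals $\Ghat$ and $\Hhat$, and then to match the terms using a bijection induced by $\varphi$. For the integral identity, writing $g$ as in \eqref{g_formula} and integrating term-by-term, every character $\varphi(\gamma)$ on $H$ integrates to zero except when $\varphi(\gamma) = 1_{\Hhat}$; since $\varphi$ is a bijection with $\varphi(1_{\Ghat}) = 1_{\Hhat}$, this leaves only the term $\widehat{f}(1_{\Ghat}) = \int_G f$, as required.

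For $\Tsol_L(g) = \Tsol_L(f)$, Proposition \ref{TLinversion} rewrites both as sums over the discrete duals (counting measure). Since $\widehat{f}$ is supported in $R$ and $\widehat{g}$ in $R'$ with $\widehat{g}(\varphi(\gamma)) = \widehat{f}(\gamma)$, the identity reduces to exhibiting a bijection
\[
\{\gamma \in \Ghat : \gamma^{c_i} \in R \text{ for every } i\} \ \longleftrightarrow \ \{\chi \in \Hhat : \chi^{c_i} \in R' \text{ for every } i\}
\]
that intertwines the factorisations in the sense that $\chi^{c_i} = \varphi(\gamma^{c_i})$ for every $i$, since this ensures the two integrands agree termwise.

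The bijection will be $\gamma \mapsto \chi := \varphi(\gamma)$, and the main work is its well-definedness and surjectivity. For well-definedness one must check (i) that $\gamma$ lies in the domain $Q = R^h$ of $\varphi$, and (ii) that $\varphi(\gamma)^{c_i} = \varphi(\gamma^{c_i})$. For (i), choose integers $n_1, \ldots, n_t$ with $n_1 c_1 + \cdots + n_t c_t = 1$ and $\sum |n_i| \leq h(L) \leq h$, so that $\gamma = \prod_i (\gamma^{c_i})^{n_i}$ exhibits $\gamma$ as a product of at most $h$ elements of $R \cup R^{-1} = R$, hence $\gamma \in R^h = Q$. For (ii), the equation $c_i \gamma = \gamma^{c_i}$ (written additively in the dual), after padding both sides with copies of $1_{\Ghat}$ so that each side has exactly $k$ summands, is a length-$k$ relation between elements of $Q$, and is therefore preserved by the Freiman $k$-isomorphism $\varphi$ because $|c_i| \leq k$. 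Surjectivity is dual: given $\chi$ in the right-hand set, define $\gamma_j := \varphi^{-1}(\chi^{c_j}) \in R$; the trivial identities $(\chi^{c_i})^{c_j} = (\chi^{c_j})^{c_i}$ pull back through $\varphi^{-1}$ (again using $|c_i|, |c_j| \leq k$) to $\gamma_i^{c_j} = \gamma_j^{c_i}$, so that $\gamma := \prod_j \gamma_j^{n_j} \in R^h = Q$ satisfies $\gamma^{c_j} = \gamma_j$ for every $j$; finally $\varphi(\gamma)/\chi$ has trivial $c_j$-power for each $j$, and the coprimality of the $c_j$ then forces $\varphi(\gamma) = \chi$.

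The main obstacle is ensuring that every relation pushed through $\varphi$ is legitimate for the Freiman $k$-isomorphism: not only must it have length at most $k$, but all the elements involved must actually lie in $Q$. Short relations are handled by the admissibility bound $k \geq \max|c_i|$; the length-$h$ representation of $\gamma$ in terms of its $c_i$-powers is what forces $Q$ to contain $R^h$, which is exactly permitted by the modelling assumption that $\Hhat$ can $(n,k)$-model $\Ghat$ for $n \leq C(\epsilon)^h$.
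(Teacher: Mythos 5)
Your proof is correct and follows essentially the same route as the paper: Fourier expansion of $g$, Proposition \ref{TLinversion}, and the bijection $\varphi:\Gamma\to\Psi$ obtained by writing $\gamma=(\gamma^{c_1})^{n_1}\cdots(\gamma^{c_t})^{n_t}\in R^h=Q$ via the multiplier-height bound and using $k\geq\abs{c_i}$ to commute $\varphi$ with $c_i$-th powers. The only (harmless) deviation is in surjectivity, where you reconstruct $\gamma$ from the pullbacks $\varphi^{-1}(\chi^{c_j})$ and finish by coprimality, whereas the paper notes $\Psi\subset\varphi(Q)$ and runs the same argument with $\varphi^{-1}$; both are fine.
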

\begin{proof}
The properties follow from the fact that \eqref{g_formula} is the Fourier expansion of $g$ and from the Fourier-inversion of $\Tsol_L$ provided by Proposition \ref{TLinversion}. Indeed,
\begin{align*} \widehat{g}(\chi) = 
\begin{cases}
\widehat{f}(\gamma) & \text{if $\chi = \varphi(\gamma)$ for some $\gamma \in R$},\\
0                   & \text{otherwise.}
\end{cases}
\end{align*}
Hence $\int_H g = \ghat(1_{\Hhat}) = \fhat(1_{\Ghat}) = \int_G f$. Now let $L = c_1 x_1 + \cdots + c_t x_t$ be a $(G,H,k)$-admissible form of multiplier-height at most $h$; thus we have integers $n_1, \ldots, n_t$ such that $n_1 c_1 + \cdots + n_t c_t = 1$ and $\abs{n_1} + \cdots + \abs{n_t} \leq h$. Proposition \ref{TLinversion} gives that
\[ \Tsol_L(f) = \sum_{\substack{\gamma \in \Ghat \\ \gamma^{c_i} \in R\,\, \forall i}} \fhat(\gamma^{c_1}) \cdots \fhat(\gamma^{c_t}) \quad \text{ and }\quad \Tsol_L(g) = \sum_{\substack{\chi \in \Hhat \\ \chi^{c_i} \in \varphi(R)\,\, \forall i}} \ghat(\chi^{c_1}) \cdots \ghat(\chi^{c_t}). \]
Let us write $\Gamma := \{ \gamma \in \Ghat : \gamma^{c_i} \in R \text{ for all $i$} \}$ and $\Psi := \{ \chi \in \Hhat : \chi^{c_i} \in \varphi(R) \text{ for all $i$} \}$ for the index sets occurring in these two sums. The result will follow if we can show that $\varphi$ is a bijection from $\Gamma$ to $\Psi$ such that $\varphi(\gamma)^{c_i} = \varphi(\gamma^{c_i})$ for all $i$ and $\gamma \in \Gamma$. We certainly have the second property, for if $\gamma \in \Gamma$ then
\[ \gamma = \gamma^{n_1 c_1 + \cdots + n_t c_t} = (\gamma^{c_1})^{n_1} \cdots (\gamma^{c_t})^{n_t} \in R^{n_1} \cdots R^{n_t} \subset Q, \]
and $\varphi$ is a $k$-isomorphism on $Q$, where $k \geq \abs{c_i}$ for all $i$. So we just need to establish that $\varphi(\Gamma) = \Psi$. Let us first deal with $\varphi(\Gamma) \subset \Psi$: let $\gamma \in \Gamma$. Then we need to show that $\varphi(\gamma)^{c_i} \in \varphi(R)$ for all $i$. But this is immediate since $\varphi(\gamma)^{c_i} = \varphi(\gamma^{c_i})$. The opposite inclusion follows in the same way using $\varphi^{-1}$ since $\Psi \subset \varphi(Q)$, which follows from the fact that if $\chi \in \Psi$ then
\[ \chi = (\chi^{c_1})^{n_1} \cdots (\chi^{c_t})^{n_t} \in \varphi(R)^{n_1} \cdots \varphi(R)^{n_t} = \varphi(R^{n_1} \cdots R^{n_t}) \subset \varphi(Q), \]
$\varphi$ being a Freiman $h$-isomorphism.
\end{proof}

The rest of the proof of Proposition \ref{transfer} is identical to that of the proof of Proposition 4.2.8 in \cite{OlofTh}, and so we are done.
\end{proof}

Proposition \ref{transfer} gives us a criterion for transferring functions between two compact abelian groups. The following lemmas show that this criterion allows us to work with the groups $\Zmod{p}$ and $\T$, since $\widehat{\Zmod{p}} \cong \Zmod{p}$ and $\widehat{\T} \cong \Z$.

\begin{lemma}
Let $n, k \in \N$ and let $p \geq (2k)^n$ be a prime. Then for any set $A \subseteq \Zmod{p}$ of size $n$ there is a set $B \subset \Z$ that is Freiman $k$-isomorphic to $A$.
In other words, $\Z$ can Freiman $(n,k)$-model $\Zmod{p}$ provided $p \geq (2k)^n$.
\end{lemma}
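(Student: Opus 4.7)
The plan is the standard dilation-and-pigeonhole argument. For any nonzero $\mu \in \Zmod{p}$ the dilation $a \mapsto \mu a$ is a Freiman $k$-isomorphism of $A$ onto $\mu A \subset \Zmod{p}$ (being the restriction of a group automorphism), so it suffices to find $\mu$ such that every element of $\mu A$ has an integer representative $b_a$ with $|b_a| < p/(2k)$. Granting this, set $B := \{ b_a : a \in A \}$; then for any $a_1, \ldots, a_{2k} \in A$ the integers $\sum_{i=1}^k b_{a_i}$ and $\sum_{i=k+1}^{2k} b_{a_i}$ both lie in $(-p/2, p/2)$, so their equality in $\Z$ is equivalent to their equality modulo $p$. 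Combined with the dilation isomorphism this yields the desired Freiman $k$-isomorphism $A \to B$.

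To produce such a $\mu$, I identify $\Zmod{p}$ with $\{0, 1, \ldots, p-1\}$, enumerate $A = \{a_1, \ldots, a_n\}$, and for each $\lambda \in \Zmod{p}$ form the vector $v_\lambda := (\lambda a_1, \ldots, \lambda a_n) \in \{0, \ldots, p-1\}^n$ (with $\lambda a_i$ interpreted as its residue in $\{0, \ldots, p-1\}$). Partitioning $\{0, \ldots, p-1\}$ into $2k$ consecutive intervals of length at most $\lceil p/(2k) \rceil$ induces a partition of the codomain into $(2k)^n$ product boxes. Since there are $p \geq (2k)^n$ vectors $v_\lambda$, pigeonhole yields distinct indices $\lambda_1, \lambda_2$ with $v_{\lambda_1}$ and $v_{\lambda_2}$ in a common box. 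Setting $\mu := \lambda_1 - \lambda_2 \neq 0$, each coordinate $\mu a_i$ is congruent modulo $p$ to a difference of two residues lying in a single interval of length at most $\lceil p/(2k) \rceil$, hence has an integer representative of absolute value at most $\lceil p/(2k) \rceil - 1 < p/(2k)$, as required.

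The only slight obstacle is the edge case when the pigeonhole is non-strict, namely $p = (2k)^n$ exactly: since $p$ is prime this forces $(2k)^n$ to be prime, hence $n = 1$ and $2k = p$, which in turn forces $k = 1$ and $p = 2$; in all such configurations $|A| \leq 1$ and the lemma is trivial (take $B = \{0\}$ or $B = \emptyset$). Away from this degeneracy the pigeonhole is strict and the argument runs cleanly.
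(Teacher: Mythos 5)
Your proof is correct and is precisely the standard dilation-plus-pigeonhole (Dirichlet box principle) argument that the paper invokes without detail, including a sound treatment of the boundary case $p=(2k)^n$. Nothing to add.
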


This result is standard and follows from an application of Dirichlet's box principle. The proof of the following lemma from \cite{OlofTh} is slightly more subtle but still elementary.

\begin{lemma}
Let $n, k$ be positive integers and let $N \geq (4k)^n$ be an integer. Then for any set $A \subseteq \Z$ of size $n$ there is a set $B \subset \Zmod{N}$ that is Freiman $k$-isomorphic to $A$.
In other words, $\Zmod{N}$ can Freiman $(n,k)$-model $\Z$ provided $N \geq (4k)^n$.
\end{lemma}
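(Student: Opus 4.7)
Given $A = \{a_1, \ldots, a_n\} \subset \Z$ of size $n$ and $N \geq (4k)^n$, I plan to iterate the pigeonhole-plus-dilation step of the previous lemma. The new difficulty is that $\max_i |a_i|$ is a priori unbounded, so a single reduction modulo $N$ need not be injective on $A$ (for instance whenever $A$ contains two elements differing by a multiple of $N$). The plan is therefore to construct step by step a sequence of Freiman $k$-isomorphic integer copies $A = A^{(0)}, A^{(1)}, \ldots, A^{(J)}$ with maxima $D_j := \max_i |a_i^{(j)}|$ shrinking until $D_J < 2(4k)^{n-1}$, and then to embed $A^{(J)} \hookrightarrow \Zmod{N}$ by plain reduction.

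For one reduction step, carried out while $D_j \geq 2(4k)^{n-1}$, I pick by Bertrand's postulate a prime $M$ in the range $[2kD_j + 1, 4kD_j + 2]$ (in particular $M \geq (4k)^n$), and apply pigeonhole to the $M+1$ points $\bigl(\{\mu a_i^{(j)}/M\}\bigr)_{i \leq n} \in [0,1)^n$ for $\mu = 0, 1, \ldots, M$, partitioned into $(4k)^n$ cubes of side $1/(4k)$. This produces $\lambda \in \{1, \ldots, M-1\}$ and balanced integer representatives $r_i \equiv \lambda a_i^{(j)} \pmod M$ with $|r_i| < M/(4k)$. I set $A^{(j+1)} := \{r_1, \ldots, r_n\}$. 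The assignment $a_i^{(j)} \mapsto r_i$ is a Freiman $k$-isomorphism by the standard two-direction check: the forward direction uses $|\sum c_i r_i| \leq 2k \cdot M/(4k) = M/2 < M$ to force the multiple-of-$M$ quantity $\sum c_i r_i = \lambda \sum c_i a_i^{(j)} - M \sum c_i m_i$ to vanish whenever $\sum c_i a_i^{(j)} = 0$; and the reverse direction uses in addition the primality of $M$ (so that $\gcd(\lambda, M) = 1$) together with $|\sum c_i a_i^{(j)}| \leq 2kD_j < M$ to deduce $\sum c_i a_i^{(j)} = 0$ from the divisibility $M \mid \lambda \sum c_i a_i^{(j)}$.

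Taking $M$ to be the smallest prime above $2kD_j + 1$ gives $M \leq (1+o(1))(2kD_j + 1)$ by standard prime-gap bounds (Nagura-style), hence $D_{j+1} < M/(4k) \leq (1+o(1))D_j/2$, showing definite geometric shrinkage. After $O(\log D_0)$ iterations the maximum drops below $2(4k)^{n-1}$ and the iteration halts. The final embedding $A^{(J)} \hookrightarrow \Zmod{N}$ by reduction modulo $N$ is itself a Freiman $k$-isomorphism: for any admissible $\mathbf{c}$ one has $|\sum c_i a_i^{(J)}| \leq 2kD_J < (4k)^n \leq N$, so mod-$N$ relations coincide with integer relations on $A^{(J)}$, and injectivity in $\Zmod{N}$ follows from $|a_i^{(J)}| < N/2$. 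Composing the chain $A \to A^{(1)} \to \cdots \to A^{(J)} \to B \subset \Zmod{N}$ yields the desired Freiman $k$-isomorphism $\varphi : A \to B$.

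The main obstacle is the three-way balance between the constraints on each intermediate modulus $M$: it must be at least $(4k)^n$ (for the pigeonhole to give precision $1/(4k)$), at least $2kD_j + 1$ (for the reverse direction of the Freiman condition), and prime (for the coprimality of $\lambda$ with $M$). Bertrand's postulate, together with the mild strengthening needed to ensure geometric shrinkage of $D_j$, makes the three requirements compatible, and termination at $D_J < 2(4k)^{n-1}$ is precisely the condition needed for the final reduction modulo $N$ to remain a Freiman $k$-isomorphism.
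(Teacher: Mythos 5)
The paper itself does not give a proof of this lemma (it defers to \cite{OlofTh}, describing the argument there as ``slightly more subtle but still elementary''), so there is no in-paper proof to compare against line by line; judged on its own terms, your iterative rectification argument is correct in substance. Each reduction step is a valid Freiman $k$-isomorphism: the forward direction from $\abs{\sum_i c_i r_i} \leq 2k\cdot M/(4k) < M$ together with $\sum_i c_i r_i \equiv \lambda \sum_i c_i a_i^{(j)} \pmod M$, the reverse from $\gcd(\lambda,M)=1$ and $\abs{\sum_i c_i a_i^{(j)}} \leq 2kD_j < M$; and the halting threshold $D_J < 2(4k)^{n-1}$ is exactly what makes the final reduction modulo $N$ work, since $2kD_J < (4k)^n \leq N$. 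Two small points deserve attention. First, with $\mu$ ranging over $0,1,\ldots,M$ the colliding pair produced by the pigeonhole could be $(0,M)$, giving $\lambda \equiv 0 \pmod M$ and collapsing the set; since during the iteration $M \geq 2kD_j+1 \geq (4k)^n+1$, you should simply restrict to $\mu \in \{0,\ldots,M-1\}$ (still more than $(4k)^n$ points), which guarantees $\lambda \in \{1,\ldots,M-1\}$ as you claim. Second, the appeal to Nagura-type prime gaps is both slightly loose (the $(1+o(1))$ is not uniform near the halting threshold, so one would have to check the explicit constant and its range of validity) and unnecessary: Bertrand already gives a prime $M$ with $2kD_j < M < 4kD_j$, whence $D_{j+1} < M/(4k) < D_j$, and since the $D_j$ are positive integers the iteration terminates after at most $D_0$ steps. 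Existence is all you need, so dropping the stronger prime-gap input both closes the uniformity quibble and keeps the argument at the elementary level the paper attributes to the original proof.
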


Thus we obtain the following immediate corollary of Proposition \ref{transfer}.

\begin{corollary}\label{transfer-cor}
Let $\epsilon>0$ and $h$ be a positive integer, and let each of $G$ and $H$ be $\Zmod{p}$ or $\T$, where $p \geq C(\epsilon, h)$ is a prime. Then for any measurable function $f : G \to [0,1]$ with $\int_G f = \alpha$ there is a continuous function $g : H \to [0,1]$ with $\int_H g = \alpha$ such that
\[ \abs{\Tsol_L(f) - \Tsol_L(g)} \leq t \epsilon \alpha^{t-2} \]
for any $(G,H,k)$-admissible linear form $L$ in $t \geq 3$ variables with multiplier-height at most $h$.
\end{corollary}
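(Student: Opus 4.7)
The proof is a direct application of Proposition \ref{transfer}, combined with the two Freiman-modelling lemmas above, specialized to the pair of groups $(G,H)$ ranging over $\{\Zmod{p},\T\}$. The plan is simply to verify that, for $p$ sufficiently large in terms of $\epsilon$ and $h$, the modelling hypothesis of Proposition \ref{transfer} is satisfied: namely, that $\widehat{H}$ can Freiman $(n,k)$-model $\widehat{G}$ for every $n \leq C(\epsilon)^h$ and some fixed $k \geq k_0(\epsilon)$.

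Fix $k = k_0(\epsilon)$ and set $n_0 = \lceil C(\epsilon)^h \rceil$. Using the identifications $\widehat{\Zmod{p}}\cong \Zmod{p}$ and $\widehat{\T} \cong \Z$, one has four cases to check. When $(G,H) = (\T,\T)$ or $(G,H) = (\Zmod{p},\Zmod{p})$, the identity map on any finite subset of the dual is trivially a Freiman $k$-isomorphism, so the hypothesis is automatic. When $(G,H) = (\Zmod{p},\T)$, we need $\Z$ to Freiman $(n,k)$-model $\Zmod{p}$ for $n \leq n_0$; the first modelling lemma above supplies this as soon as $p \geq (2k)^{n_0}$. When $(G,H) = (\T,\Zmod{p})$, we need $\Zmod{p}$ to Freiman $(n,k)$-model $\Z$ for $n \leq n_0$; the second modelling lemma supplies this as soon as $p \geq (4k)^{n_0}$.

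Hence, setting $C(\epsilon,h) := (4k_0(\epsilon))^{\lceil C(\epsilon)^h\rceil}$, every prime $p \geq C(\epsilon,h)$ makes the Freiman-modelling hypothesis of Proposition \ref{transfer} valid in all four cases. Applying that proposition produces a continuous function $g : H \to [0,1]$ with $\int_H g = \alpha$ and
\[ |\Tsol_L(f)-\Tsol_L(g)| \leq t\epsilon\alpha^{t-2} \]
for every $(G,H,k)$-admissible linear form $L$ in $t \geq 3$ variables of multiplier-height at most $h$, which is precisely the claim. There is no genuine obstacle here: the hard work has already been absorbed into Proposition \ref{transfer} and the two modelling lemmas, and the only task is the bookkeeping of the four cases and the choice of the constant $C(\epsilon,h)$.
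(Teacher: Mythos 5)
Your proposal is correct and matches the paper's intended argument: the paper states the corollary as an immediate consequence of Proposition \ref{transfer} together with the two Freiman-modelling lemmas, which is exactly the case-checking you carry out (correctly pairing $\widehat{H}$ modelling $\widehat{G}$ in each of the four configurations and taking $p$ large enough to satisfy both $(2k)^n$ and $(4k)^n$ bounds).
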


\section{From functions to sets}\label{functions-to-sets}
In our proof of Theorem \ref{mainres} we shall use Proposition \ref{transfer} to obtain a function $g$ with certain properties, and we shall then require a set with similar properties. The existence of such a set will be guaranteed by the following result.

\begin{lemma}\label{MainF2S}
Let $\epsilon > 0$, let $G$ be $\T$ or $\Zmod{p}$ for $p$ sufficiently large, and let $f : G \to [0,1]$ be measurable. Then there exists a measurable set $A\subset G$ such that $\left|\mu_G(A)-\int_G f\right|\leq \epsilon$ and
 $|\Tsol_L(A)-\Tsol_L(f)|\leq t\epsilon$ for any $G$-admissible linear form $L$ in $t \geq 3$ variables.
\end{lemma}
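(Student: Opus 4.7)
My plan is to reduce the bound on $|\Tsol_L(A) - \Tsol_L(f)|$ to a bound on $\norm{1_A - f}_{U^2(G)}$ and then to construct such an $A$ by a probabilistic method. The reduction is by telescoping: writing
\[
\Tsol_L(1_A) - \Tsol_L(f) = \sum_{i=1}^t \Tsol_L(1_A,\ldots,1_A,\,1_A-f,\,f,\ldots,f)
\]
with the difference in the $i$-th slot, Theorem~\ref{U2control} (applied with the $i$-th argument as the minimizer) bounds each summand by $\norm{1_A - f}_{U^2(G)}\prod_{j \neq i}\norm{h_j}_{\mathcal{L}_2} \leq \norm{1_A - f}_{U^2(G)}$, using that $1_A$ and $f$ take values in $[0,1]$ and hence have $\mathcal{L}_2$-norm at most $1$. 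It therefore suffices to build $A$ with $\norm{1_A - f}_{U^2(G)} \leq \epsilon$; the density condition $|\mu_G(A) - \int_G f| \leq \epsilon$ is nothing but the trivial-character contribution in this Fourier control.

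For $G = \Zmod p$ I would use the usual random-selection construction: include each $x$ in $A$ independently with probability $f(x)$. Then $\widehat{1_A}(\gamma) - \widehat{f}(\gamma) = p^{-1}\sum_x (1_A(x)-f(x))\overline{\gamma(x)}$ is a sum of $p$ independent mean-zero complex variables of modulus $\leq 1/p$; Hoeffding's inequality and a union bound over all $p$ characters yield $\max_\gamma|\widehat{1_A}(\gamma) - \widehat{f}(\gamma)| = O(\sqrt{\log p / p})$ with positive probability. Combined with Parseval, this gives
\[
\norm{1_A - f}_{U^2}^4 = \sum_\gamma|\widehat{1_A}(\gamma) - \widehat{f}(\gamma)|^4 \leq \max_\gamma|\widehat{1_A}(\gamma) - \widehat{f}(\gamma)|^2 \cdot \norm{1_A - f}_{\mathcal{L}_2}^2 = O(\log p / p),
\]
which is below $\epsilon^4$ for $p$ sufficiently large.

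For $G = \T$ I would partition $\T$ into $N$ equal intervals $I_j$, form the step approximant $\tilde f = \sum_j \alpha_j 1_{I_j}$ with $\alpha_j = N\int_{I_j} f$, and include each whole interval $I_j$ in $A$ independently with probability $\alpha_j$. Since $\norm{\tilde f - f}_{\mathcal{L}_2(\T)} \to 0$ as $N \to \infty$ by $\mathcal{L}_2$-convergence of conditional expectations, the deterministic discrepancy $\norm{\tilde f - f}_{U^2}$ is negligible. For $\norm{1_A - \tilde f}_{U^2}$ I would split the Fourier series at a cutoff $|n|=K$. The low-frequency part is controlled exactly as in the $\Zmod p$ case: $\widehat{1_A}(n) - \widehat{\tilde f}(n) = \sum_j(\xi_j - \alpha_j)\widehat{1_{I_j}}(n)$ is a sum of $N$ mean-zero complex variables of modulus $\leq 1/N$, so Hoeffding and a union bound give $\max_{|n|\leq K}|\widehat{1_A}(n) - \widehat{\tilde f}(n)| = O(\sqrt{\log K / N})$. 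For the high-frequency tail, the step-function structure forces $|\widehat{1_A}(n)|, |\widehat{\tilde f}(n)| = O(N/|n|)$, hence
\[
\sum_{|n|>K}|\widehat{1_A}(n) - \widehat{\tilde f}(n)|^4 \leq \max_{|n|>K}|\widehat{1_A}(n) - \widehat{\tilde f}(n)|^2 \cdot \norm{1_A - \tilde f}_{\mathcal{L}_2}^2 = O(N^2/K^2).
\]
Taking $K$ of order $N/\epsilon^2$ and $N$ polynomially large in $1/\epsilon$ makes both contributions fall below $\epsilon^4$, and a deterministic realisation exists by the probabilistic method. I expect the main obstacle to be this high-frequency tail in the $\T$ case: uniform Fourier control over all $n\in\Z$ is unavailable from the hypothesis on $f$ alone, and the role of the step-function construction is precisely to give $1_A$ and $\tilde f$ the same coarse $1/N$-scale structure, forcing matching Fourier decay on scales above $N$.
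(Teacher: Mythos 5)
Your proposal is correct, and its skeleton is the same as the paper's: reduce to finding $A$ with $\norm{1_A-f}_{U^2}\leq\epsilon$ via telescoping plus Theorem \ref{U2control} (the paper records this reduction as the deduction of Lemma \ref{MainF2S} from Lemma \ref{F2S}, with the density condition likewise coming from domination of the mean by the $U^2$ norm), and then build $A$ by random rounding -- pointwise on $\Zmod{p}$, and interval-by-interval on a step-function approximant of $f$ on $\T$. Where you genuinely diverge is in how the $U^2$ smallness of the random rounding is verified. The paper computes $\E\norm{g-1_A}_{U^2}^4$ directly from the parallelogram definition: by independence the integrand has zero expectation unless two vertices of the parallelogram fall in the same interval $I_{n,j}$ (or coincide, in the $\Zmod{p}$ case), and the measure of such degenerate configurations is $O(2^{-n})$ (resp.\ $O(p^{-1})$), so no concentration inequality, union bound, or frequency decomposition is needed, and the only requirement on the partition is that its mesh tend to $0$. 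Your route works on the Fourier side, using $\norm{h}_{U^2}^4=\sum_\gamma\lvert\widehat{h}(\gamma)\rvert^4$, Hoeffding plus a union bound for frequencies up to $K$, and the decay $\lvert\widehat{1_A}(n)-\widehat{\tilde f}(n)\rvert=O(N/\lvert n\rvert)$ forced by the common $1/N$-scale step structure for the tail; this is slightly heavier (the high/low split and the choice $K\sim N/\epsilon^2$, $N$ polynomial in $1/\epsilon$ are extra moving parts, and the $\Zmod{p}$ bound picks up a harmless $\log p$ the paper avoids), but it is fully quantitative and makes explicit why rounding whole intervals, rather than points, is what kills the high frequencies. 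Both arguments are sound; also note your appeal to $\mathcal{L}_2$-convergence of conditional expectations plays exactly the role of the paper's Littlewood-principle Lemma \ref{LW-fns} (and $\norm{f-\tilde f}_{U^2}\leq\norm{f-\tilde f}_{\mathcal{L}_2}$ suffices, just as the paper uses $\norm{\cdot}_{U^2}\leq\norm{\cdot}_{\mathcal{L}_1}^{1/4}$ for $[-1,1]$-valued functions).
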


Note that any linear form is $\T$-admissible and most forms are $\Zmod{p}$-admissible. To prove this lemma we shall use Theorem \ref{U2control} and a probabilistic construction that is familiar in the setting of finite abelian groups. 

Given two sets $A,B$ let $A\Delta B$ denote their symmetric difference. The following notion of discretization will be used here and in the next section.

\begin{defn}
A set $A \subset \T$ is \emph{$(\delta,n)$-measurable} if there exists a set $B$ which is the union of dyadic intervals $I_{n,j} := [(j-1)/2^n, j/2^n)$, $j\in [2^n]$, such that $\mu(A\Delta B)<\delta$.

A function $f : \T \to [0,1]$ is \emph{$(\delta,n)$-measurable} if there is a function $g : \T \to [0,1]$ that is constant on the intervals $I_{n,j}$ such that $\norm{f-g}_{\mathcal{L}_1} < \delta$.
\end{defn}

We shall use both of the following equivalent instances of Littlewood's first principle, which are discussed in \cite[\S 2.4]{tao-year1}.

\begin{lemma}\label{LW}
Let $A \subset \T$ be measurable. Then for every $\delta > 0$ there exists $n$ such that $A$ is $(\delta, n)$-measurable.
\end{lemma}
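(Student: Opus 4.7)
The plan is to apply Littlewood's first principle in its classical form: approximate $A$ first by an open set, then by a finite union of open arcs, then by a union of dyadic intervals. Given $\delta>0$, by outer regularity of Haar measure on $\T$ we may choose an open $U\supset A$ with $\mu(U\setminus A)<\delta/3$. Since every open subset of $\T$ is a countable disjoint union of open arcs, discarding all but finitely many yields a set $V = J_1\cup\cdots\cup J_m$ of open arcs with $V\subset U$ and $\mu(U\setminus V)<\delta/3$.

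Next I discretize $V$ at the dyadic scale. For each arc $J_k$ and each integer $n\geq 1$, let $B_k^{(n)}$ be the union of those dyadic intervals $I_{n,j}$ that are contained in $J_k$. The symmetric difference $J_k\,\Delta\, B_k^{(n)}$ is contained in the at most two dyadic intervals $I_{n,j}$ that meet the endpoints of $J_k$, so $\mu\bigl(J_k\,\Delta\, B_k^{(n)}\bigr)\leq 2^{1-n}$. Setting $B:=\bigcup_{k=1}^m B_k^{(n)}$, which is a union of dyadic intervals of the prescribed form, we obtain $\mu(V\,\Delta\, B)\leq 2m\cdot 2^{-n}$, and this is less than $\delta/3$ once $n$ is chosen large enough (depending on $m$, and hence on $A$ and $\delta$).

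Finally, I combine the estimates using the triangle inequality for symmetric differences: $\mu(A\,\Delta\, B)\leq \mu(A\,\Delta\, V)+\mu(V\,\Delta\, B)$. Since $V\subset U$ we have $\mu(V\setminus A)\leq \mu(U\setminus A)<\delta/3$, and since $A\subset U$ we have $\mu(A\setminus V)\leq \mu(U\setminus V)<\delta/3$, so $\mu(A\,\Delta\, V)<2\delta/3$. Together with the bound on $\mu(V\,\Delta\, B)$ this yields $\mu(A\,\Delta\, B)<\delta$, proving that $A$ is $(\delta,n)$-measurable.

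There is no substantive obstacle here; this is a routine incarnation of Littlewood's first principle, and the only care needed is to split $\delta$ into three parts so that the three sources of error (outer regularity, reduction to finitely many arcs, and dyadic rounding of each arc) are each controlled separately.
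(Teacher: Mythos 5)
Your argument is correct and is precisely the standard proof of Littlewood's first principle (outer regularity, reduction to finitely many arcs, dyadic rounding), which the paper does not reprove but simply cites from Tao's book \cite[\S 2.4]{tao-year1}. So you have supplied, correctly, the routine argument that the authors delegate to a reference; the only (harmless) edge case worth a word is $U=\T$, where one can take $B=\T$ directly.
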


\begin{lemma}\label{LW-fns}
Let $f : \T \to [0,1]$ be measurable. Then for every $\delta > 0$ there exists $n$ such that $f$ is $(\delta, n)$-measurable.
\end{lemma}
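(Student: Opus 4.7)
The plan is to reduce Lemma \ref{LW-fns} to Lemma \ref{LW} via a standard approximation of $f$ by a simple function. Given $\delta>0$, I would first invoke basic measure theory to produce a simple function $s=\sum_{k=1}^K c_k 1_{A_k}$, with $c_k\in [0,1]$ and disjoint measurable $A_k\subset\T$, such that $\|f-s\|_{\mathcal{L}_1}<\delta/2$. An explicit choice is $A_k:=f^{-1}\big([(k-1)/K,k/K)\big)$ and $c_k:=(k-1)/K$ for $K=K(\delta)$ large enough, which gives a pointwise error of at most $1/K$ everywhere.

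Next I would apply Lemma \ref{LW} to each of the finitely many sets $A_k$ with tolerance $\delta/(2K)$, obtaining integers $n_k$ and sets $B_k$ which are unions of dyadic intervals $I_{n_k,j}$ with $\mu(A_k\Delta B_k)<\delta/(2K)$. Setting $n:=\max_k n_k$, each $B_k$ is still a union of dyadic intervals at the finer scale $n$, since every $I_{n_k,j}$ partitions into dyadic intervals of length $2^{-n}$.

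Define $g_0:=\sum_{k=1}^K c_k 1_{B_k}$; this is constant on each $I_{n,j}$. It need not take values in $[0,1]$ because the $B_k$ may overlap even though the $A_k$ do not, so set $g:=\max\bigl(0,\min(1,g_0)\bigr)$ pointwise. The truncation preserves the property of being constant on each $I_{n,j}$, and since $f$ takes values in $[0,1]$ the truncation can only bring $g_0$ closer to $f$. Hence by the triangle inequality
\[
\|g-f\|_{\mathcal{L}_1}\leq \|g_0-s\|_{\mathcal{L}_1}+\|s-f\|_{\mathcal{L}_1}\leq \sum_{k=1}^K c_k\,\mu(A_k\Delta B_k)+\delta/2\leq \delta,
\]
exhibiting $f$ as $(\delta,n)$-measurable.

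There is no real obstacle here: the only points requiring care are the verification that a union of dyadic intervals at scale $n_k$ is still a union of dyadic intervals at any finer scale $n\geq n_k$, and that pointwise truncation to $[0,1]$ preserves piecewise constancy on the $I_{n,j}$. Both are immediate, so the argument is essentially bookkeeping built on top of Lemma \ref{LW} and the density of simple functions in $\mathcal{L}_1(\T)$. (As an alternative one could define $g$ directly as the conditional expectation $\mathbb{E}[f\mid\mathcal{F}_n]$ with respect to the dyadic $\sigma$-algebra $\mathcal{F}_n$ generated by the $I_{n,j}$, and invoke the Lebesgue differentiation theorem together with dominated convergence, but the reduction to Lemma \ref{LW} is more in keeping with the flow of the paper.)
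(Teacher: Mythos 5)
Your argument is correct. Note that the paper does not actually prove Lemma \ref{LW-fns}: it states it alongside Lemma \ref{LW} as one of two ``equivalent instances of Littlewood's first principle'' and cites \cite[\S 2.4]{tao-year1}, so there is no in-paper proof to compare against. Your reduction of the function version to the set version is exactly the kind of routine derivation the authors are implicitly invoking, and it goes through: approximate $f$ in $\mathcal{L}_1$ by the simple function built from the level sets $A_k$, approximate each $A_k$ by a dyadic-union $B_k$ at a common scale $n=\max_k n_k$ (refinement of dyadic intervals is indeed harmless), and truncate $g_0=\sum_k c_k 1_{B_k}$ to $[0,1]$, observing that truncation is $1$-Lipschitz onto $[0,1]$ and so cannot increase the pointwise distance to $f$. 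Two cosmetic points: the level sets $f^{-1}\bigl([(k-1)/K,k/K)\bigr)$ miss the points where $f=1$, so the top cell should be taken closed (or the value $1$ absorbed into the last $A_K$); and since the definition of $(\delta,n)$-measurability asks for $\norm{f-g}_{\mathcal{L}_1}<\delta$ strictly, you should either note that your intermediate inequalities are strict or run the argument with $\delta/3$ in place of $\delta/2$. Neither affects the substance. Your alternative suggestion, taking $g=\mathbb{E}[f\mid\mathcal{F}_n]$ for the dyadic $\sigma$-algebra and using martingale/Lebesgue differentiation convergence, is also valid and arguably shorter, but the reduction to Lemma \ref{LW} fits the paper's framing of the two lemmas as equivalent.
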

While one cannot in general approximate a $[0,1]$-valued function by a set in $\mathcal{L}_1(G)$, one can do so in $U^2(G)$ in the sense
of the following result. It is this that allows us to establish Lemma 4.1.
\begin{lemma}\label{F2S}
Let $\epsilon > 0$, let $G$ be $\T$ or $\Zmod{p}$ for $p$ sufficiently large, and let $f : G \to [0,1]$ be measurable.
Then there is a measurable set $A \subset G$ such that $\lVert f - 1_A \rVert_{U^2} \leq \epsilon$. Moreover, for $G = \T$ we may take $A$ to be a finite union of dyadic intervals.
\end{lemma}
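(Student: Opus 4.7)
The plan is a probabilistic random-rounding construction, handled pointwise when $G=\Zmod{p}$ and on dyadic intervals when $G=\T$, the latter case preceded by a step-function approximation of $f$ obtained from Lemma \ref{LW-fns}.

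For $G=\Zmod{p}$, let $A\subset G$ be a random set whose indicators $1_A(x)$ are independent Bernoulli variables with mean $f(x)$. Write $h=1_A-f$, so that the $h(x)$ are independent, mean zero, and bounded by $1$ in modulus. Expanding
\[
\E\norm{h}_{U^2}^4=\frac{1}{p^4}\sum_{x,x',y,y'} \E\bigl[h(x+y)\,\overline{h(x+y')}\,\overline{h(x'+y)}\,h(x'+y')\bigr],
\]
the inner expectation is zero whenever the four points $x+y$, $x+y'$, $x'+y$, $x'+y'$ are pairwise distinct, by independence and mean zero. Any coincidence between two of them forces a non-trivial linear relation among $x-x'$ and $y-y'$ (e.g.\ $x+y=x+y'$ gives $y=y'$), so the set of offending quadruples has density $O(1/p)$ in $G^4$; since the integrand is bounded by $1$, $\E\norm{h}_{U^2}^4=O(1/p)$, and for $p$ large some realisation gives $\norm{h}_{U^2}\leq\epsilon$.

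For $G=\T$, fix a small $\delta>0$ (to be chosen in terms of $\epsilon$) and apply Lemma \ref{LW-fns} to obtain a level $n$ and a step function $g:\T\to[0,1]$, constant on each dyadic interval $I_{n,j}$, with $\norm{f-g}_{\mathcal{L}_1}<\delta$. Since $|f-g|\leq 1$, we get $\norm{f-g}_{U^2}\leq\norm{f-g}_{\mathcal{L}_2}\leq\delta^{1/2}$, so it suffices to approximate $g$ in $U^2$ by a finite union of dyadic intervals. To do so, write $g_j$ for the value of $g$ on $I_{n,j}$, let $B_j\sim\mathrm{Bernoulli}(g_j)$ be independent across $j$, and set $A=\bigcup_{j:B_j=1}I_{n,j}$, so that $h:=1_A-g$ is a step function on the same partition with mean-zero, independent values across intervals. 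Writing $\E\norm{h}_{U^2}^4$ as an integral over $(x,x',y,y')\in\T^4$ and moving the expectation inside, the contribution from tuples for which the four points $x+y$, $x+y'$, $x'+y$, $x'+y'$ lie in four distinct dyadic intervals vanishes just as in the discrete case. The remaining ``coincidence'' set, where at least two of these four points lie in a common interval $I_{n,k}$, has Haar measure $O(2^{-n})$ in $\T^4$: for example, a linear change of variables shows that $\{(x,x',y,y'):x+y\text{ and }x+y'\text{ share a dyadic interval}\}$ has measure exactly $2^{-n}$, and similarly for each of the other pairings. As the integrand is bounded by $1$, $\E\norm{h}_{U^2}^4=O(2^{-n})$, and choosing $n$ large (and $\delta$ correspondingly small) produces a realisation of $A$, by construction a finite union of dyadic intervals, with $\norm{1_A-g}_{U^2}\leq\epsilon/2$; the triangle inequality combined with $\norm{f-g}_{U^2}\leq\epsilon/2$ then closes the argument.

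The main technical point is the $O(2^{-n})$ bound on the coincidence set in the $\T$ case, which ensures that the combinatorial cancellation underpinning the classical discrete $U^2$ approximation argument survives in the continuous setting; everything else is a routine adaptation of the finite-group argument together with Littlewood's first principle to move between measurable functions and step functions on $\T$.
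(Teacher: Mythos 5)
Your proposal is correct and follows essentially the same route as the paper: independent pointwise rounding for $\Zmod{p}$, and for $\T$ a Littlewood step-function approximation followed by independent rounding on dyadic intervals, with the error controlled by the measure of the set of degenerate parallelograms (your $O(2^{-n})$ coincidence-set bound is the paper's slab computation in the $(x,h,k)$ parametrization). The only differences are cosmetic, such as bounding $\norm{f-g}_{U^2}$ by $\norm{f-g}_{\mathcal{L}_2}\leq\delta^{1/2}$ rather than by $\norm{f-g}_{\mathcal{L}_1}^{1/4}$, and implicitly one should note (as the paper also leaves implicit) that the level $n$ from Lemma \ref{LW-fns} may be refined upwards so that $2^{-n}$ can be taken as small as needed.
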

\begin{proof}
For $G=\Zmod{p}$ this a standard result: define a subset $A$ of $G$ randomly by letting $x \in A$ with probability $f(x)$ independently for each $x \in G$. Then by independence the expectation of $\norm{f-1_A}_{U^2}^4$ equals 0, up to an error of magnitude $O(p^{-1})$ due to averaging over degenerate parallelograms (i.e. parallelograms $(x+y,x+y',x'+y,x'+y')$ with at least two vertices being equal). Thus there exists a choice of $A$ for which $\norm{f-1_A}_{U^2}=O(p^{-1/4})$.

Now we adapt this standard argument to deal with the group $\T$. First we approximate $f$ by a suitable step-function: by Lemma \ref{LW-fns} there is an integer $n$ and coefficients $\gamma_j\in [0,1]$ such that the function $g=\sum_{j\in [2^n]} \gamma_j 1_{I_{n,j}}$ satisfies $\norm{f-g}_{\mathcal{L}_1} < \epsilon^4$. We thus have $\norm{f-g}_{U^2} \leq \norm{f-g}_{\mathcal{L}_1}^{1/4}<\epsilon$.

Next we define an appropriate finite probability space. Consider the set $\Omega$ of functions of the form $1_A(x)=\sum_{j\in [2^n]} \alpha_j 1_{I_{n,j}}(x)$, where for all $j$ we have $\alpha_j\in\{0,1\}$. An element
in $\Omega$ can be identified with an element $\alpha=(\alpha_1,...,\alpha_{2^n})$ of $\{0,1\}^{2^n}$, and we can then
define a probability on $\Omega$ by declaring the events $\{\alpha_j=1\}$ to be independent and assigning to $\{\alpha_j=1\}$
the probability $\gamma_j$.

Now we compute the expectation of $\norm{g-1_A}_{U^2}^4$ for a randomly chosen $1_A \in \Omega$, using the following familiar expression for the $U^2$ norm:
\[\norm{f}_{U^2(G)}^4=\int_{G^3} f(x)\,\overline{f(x+h)}\,\overline{f(x+k)}\,f(x+h+k) \ud x \ud h \ud k.\]
This expression is seen to equal \eqref{U2} by Lemma \ref{cts-surj-hom}.
By Fubini's theorem we have
\[
\mathbb{E} \norm{g-1_A}_{U^2}^4=\int_{\T^3}
\mathbb{E} (g-1_A)(x)(g-1_A)(x+h)(g-1_A)(x+k)(g-1_A)(x+h+k)\; \ud x\ud h\ud k.
\]
Fix any values for $x,h,k$ such that no two vertices of the corresponding parallelogram $(x,x+h,x+k,x+h+k)$ lie in the same interval
$I_{n,j}$. Then the expectation of the product $(g-1_A)(x)(g-1_A)(x+h)(g-1_A)(x+k)(g-1_A)(x+h+k)$ is 0, by independence. Now consider the integral of this product over values of $x,h,k$ such that at least two vertices of the corresponding parallelogram lie in the same interval. This integral is at most the Haar measure of the set $D$ of these parallelograms,
\[
D=\{(x,h,k)\in \T^3: \exists \omega,\omega'\in\{0,1\}^2, \omega\neq\omega', \exists
j\in [2^n], I_{n,j}\ni x+\omega\cdot (h,k),x+\omega'\cdot (h,k)\},
\]
where $\omega\cdot (h,k)=\omega_1 h+\omega_2 k$. Note that $D$ is the support of the measurable function
\[
F_D:(x,h,k)\mapsto \sum_{\substack{\omega,\omega'\in \{0,1\}^2 \\\omega\neq\omega'}} \sum_{j\in[2^n]}
1_{I_{n,j}}\big(x+\omega\cdot (h,k)\big) 1_{I_{n,j}}\big(x+\omega'\cdot (h,k)\big),
\]
so $D$ is a measurable subset of $\T^3$. We want to show that the Haar measure of $D$ vanishes as $n\rightarrow \infty$, and for that it clearly suffices to show that the integral of $F_D$ vanishes. Fix any pair of distinct $\omega,\omega'$. Then $\omega-\omega'$ has at least one non-zero coordinate (where the subtraction here is coordinate-wise). For a fixed $x$, we have $x+\omega\cdot (h,k),x+\omega'\cdot (h,k)$ in the same interval $I_{n,j}$ only if $(\omega-\omega')\cdot (h,k)\in (-2^{-n},2^{-n})$. It follows that, for this pair $\omega,\omega'$, the integral
\[
\int_{\T^3} \sum_{j\in[2^n]}
1_{I_{n,j}}\big(x+\omega\cdot (h,k)\big) 1_{I_{n,j}}\big(x+\omega'\cdot (h,k)\big)\ud x\ud h \ud k
\]
is at most the Haar measure of the slab $S=\{(h,k)\in\T^2:(\omega-\omega')\cdot (h,k)\in (-2^{-n},2^{-n})\}$, which is $2^{-n+1}$. Applying this argument to each pair $\omega,\omega'$ shows that the Haar measure of $D$ vanishes as required.
\end{proof}

\begin{remark}
A minor modification of the above proof allows one to replace the $U^2$ norm by the $U^d$ norm for any $d \geq 2$; these norms are useful generalizations of the $U^2$ norm \cite{T-V}.
\end{remark}

\begin{proof}[Proof of Lemma \ref{MainF2S}]
Let $G$ be either $\T$ or $\Zmod{p}$, where $p$ is large. For a measurable function $f : G\to[0,1]$, let $A$ be the set given by Lemma \ref{F2S} applied with initial parameter $\epsilon$. Then by the Cauchy-Schwarz inequality we have $\abs{\int_G 1_A-\int_G f} \leq \norm{1_A-f}_{U^2}\leq\epsilon$. Now let $L$ be any $G$-admissible linear form in at least three variables. Then multilinearity of $S_L$ and Theorem \ref{U2control} imply $\abs{\Tsol_L(f)-\Tsol_L(A)} \leq t \norm{f-1_A}_{U^2}\leq t \epsilon$.
\end{proof}

\section{Removal lemmas}\label{Removal}

In this section, for $G=\T$ or $\Zmod{p}$, we are interested in measurable subsets $A\subset G$ of positive measure such that $\Tsol_L(A)<\delta$ for some small $\delta>0$. We shall prove the following result.

\begin{lemma}\label{Tremoval-t}
Let $\epsilon>0$ and let $L$ be a linear form in $t$ variables. There exists $\delta>0$ such that, for any measurable sets $A_1,A_2,\ldots,A_t\subset\T$ satisfying $\Tsol_L(A_1,A_2,\ldots,A_t)<\delta$, there are measurable sets $E_i\subset\T$ with $\mu(E_i)< \epsilon$ such that $(A_1\setminus E_1) \times (A_2\setminus E_2)\times\cdots\times (A_t\setminus E_t) \cap \ker L =\emptyset$.
\end{lemma}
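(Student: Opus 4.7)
The plan is to reduce the problem to the analogous removal lemma on a finite cyclic group $\Zmod{N}$---known for general linear forms in finite abelian groups---via a dyadic discretization of $\T$.

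First, given $\epsilon>0$, I would apply Lemma \ref{LW} to each $A_i$ to produce a common depth $n$ and a union of dyadic intervals $B_i=\bigcup_{j\in\tilde B_i} I_{n,j}$ with $\mu(A_i\Delta B_i)<\epsilon/2$, where $\tilde B_i\subset\Zmod{N}$ and $N:=2^n$. Setting $E_i := (A_i\setminus B_i)\cup\bigcup_{j\in\tilde E_i}I_{n,j}$ at the end, it suffices to produce $\tilde E_i\subset\Zmod{N}$ of density below $\epsilon/2$ such that, after removing $\tilde E_i$ from $\tilde B_i$, no surviving dyadic box $I_{n,j_1}\times\cdots\times I_{n,j_t}$ meets $\ker L$.

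Second, I would relate $\Tsol_L$ on $\T$ to solution counts in $\Zmod{N}$. Writing $L=c_1x_1+\cdots+c_tx_t$ and $C:=\sum_i|c_i|$, observe that for $x_i\in I_{n,j_i}$ the value $L(x)$ lies in an interval of length $C/N$ about $L(j_1,\ldots,j_t)/N$, so the condition $L(x)\equiv 0\pmod 1$ forces $L(j_1,\ldots,j_t)$ to lie in a set $Z\subset\Zmod{N}$ of at most $C$ consecutive residues. Using Lemma \ref{cts-surj-hom} applied to the parametrization $(y_1,\ldots,y_{t-1})\mapsto(c_ty_1,\ldots,c_ty_{t-1},-c_1y_1-\cdots-c_{t-1}y_{t-1})$ from the proof of Theorem \ref{U2control}, a direct computation provides a lower bound
\[
\Tsol_L(B_1,\ldots,B_t)\geq c_L\,N^{-(t-1)}\sum_{k\in Z}\#\bigl\{(j_1,\ldots,j_t)\in\tilde B_1\times\cdots\times\tilde B_t:L(j)\equiv k\pmod N\bigr\}
\]
with a constant $c_L>0$ depending only on $L$. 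Combined with the $\epsilon/2$-discretization error, the hypothesis $\Tsol_L(A_1,\ldots,A_t)<\delta$ then bounds each finite count on the right by $\delta_0 N^{t-1}$, where $\delta_0$ can be made arbitrarily small by choosing $\delta$ small (after fixing $n$, and hence $N$).

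Third, I would invoke the known removal lemma for inhomogeneous linear equations in finite abelian groups separately for each $k\in Z$: for $\delta_0$ sufficiently small, this yields exceptional sets $\tilde E_i^{(k)}\subset\Zmod{N}$ of density at most $\epsilon/(2tC)$ such that $(\tilde B_1\setminus\tilde E_1^{(k)})\times\cdots\times(\tilde B_t\setminus\tilde E_t^{(k)})$ contains no solution to $L(y)\equiv k\pmod N$. Setting $\tilde E_i:=\bigcup_{k\in Z}\tilde E_i^{(k)}$ and lifting to the $E_i$ as above completes the argument. The main obstacle is Step 2: $L(x)=0$ on $\T$ corresponds not to a single equation modulo $N$ but to a family of $|Z|\leq C$ shifted equations, and one must apply the removal lemma to each before taking the union of exceptional sets; since $C$ depends only on $L$, this incurs only a fixed-constant blow-up in densities, which is harmless.
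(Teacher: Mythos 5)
Your overall plan (discretize by dyadic intervals and import a removal lemma over $\Zmod{N}$) is in the same spirit as the paper's, but Step 2 contains a genuine gap. The claimed lower bound $\Tsol_L(B_1,\ldots,B_t)\geq c_L\,N^{-(t-1)}\sum_{k\in Z}\#\{j:L(j)\equiv k\}$ with a single constant $c_L>0$ is false when all coefficients of $L$ have the same sign. Writing $x_i=(j_i+u_i)/N$ with $u_i\in[0,1)$, the residue in $Z$ corresponding to $c_1u_1+\cdots+c_tu_t=0$ is attainable only at the corner $u=0$, a set of zero $(t-1)$-dimensional measure, so boxes in that residue class meet $\ker L$ (the corner is a genuine solution, since the intervals are half-open) while contributing nothing to $\Tsol_L$. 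Concretely, for $L(x)=x_1+x_2+x_3$ take $N=2^n$, $M=4$, and let each $A_i=B_i$ be the union of the intervals $[a/N,(a+1)/N)$ over multiples $a$ of $M$: then $\Tsol_L(A_1,A_2,A_3)=0<\delta$, yet the number of triples with $a_1+a_2+a_3\equiv 0 \pmod N$ is $(N/M)^2$, so your inequality fails, the smallness hypothesis of the finite removal lemma is unavailable for that residue, and the surviving boxes still contain the corner solutions $(a_1/N,a_2/N,a_3/N)\in\ker L$ — so the exact emptiness demanded by the lemma is not achieved. The gap is repairable: for same-signed forms the uncontrolled residue only produces solutions in which \emph{every} coordinate is a left endpoint of a dyadic interval, so adding the null set $\{0,1/N,\ldots,(N-1)/N\}$ to each $E_i$ (free in measure) destroys them, and the remaining residues do have slices of measure bounded below; but as written your uniform $c_L$ and the application of the removal lemma to every $k\in Z$ are wrong. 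There is also a bookkeeping slip: discretizing only to accuracy $\epsilon/2$ gives $\Tsol_L(B_1,\ldots,B_t)\leq\delta+t\epsilon/2$, which is not small; as in the paper's proof of Lemma \ref{offdiag} you must discretize to accuracy of order $\delta$ (with $\delta<\epsilon$) so that the $E_i$ still have measure below $\epsilon$.

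Beyond the gap, note that your black box — a removal lemma for an arbitrary-coefficient inhomogeneous equation over $\Zmod{2^n}$, where the $c_i$ need not be invertible — is substantially stronger than what the paper permits itself. The paper first reduces a general $L$ to the single form $\mathbf{1}$ by dilating the sets on $\T$ (Lemma \ref{T_L_L'}), where every nonzero dilation is surjective, and then needs only Green's removal lemma for $x_1+\cdots+x_t$ over $\Zmod{N}$ (Lemma \ref{removal_lemma_finite}); the shifts $r=0,\ldots,t-2$ arise through the Eulerian-number identity of Lemma \ref{discrete_solutions}, each with strictly positive weight, and since the normalized form has mixed signs the boundary-residue phenomenon above never occurs. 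If you keep your direct route you must either prove the general-coefficient removal lemma over $\Zmod{2^n}$ (non-invertible even coefficients require care, though after reducing to coprime coefficients some $c_i$ is odd, which also lets you absorb the shift $k$ by translating one set) or perform the paper's dilation reduction first and then argue as you do.
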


This result has a well-known analogue for finite abelian groups, an analogue that now has several proofs. One such proof, given in \cite{KSVRL}, proceeds by turning the problem into one of removing small subgraphs from a certain auxiliary graph, and then applying a removal lemma for graphs. Unfortunately this argument does not seem to extend straightforwardly to the infinitary setting of the group $\T$. An earlier proof was given by Green \cite{GAR}, who established a Fourier-analytic regularity lemma from which the finitary removal lemma follows. This proof does generalize to the infinitary setting, allowing one to establish a removal lemma for arbitrary compact abelian groups, but checking this is somewhat technical. Instead, we shall prove Lemma \ref{Tremoval-t} using a finitary removal lemma as a black box. We first reduce to the special case where $L$ is the linear form $L(x)=x_1+\cdots+x_t$; this form will be denoted by $\mathbf{1}$ throughout this section.

\begin{lemma}\label{offdiag}
For any $\epsilon>0$ and any positive integer $t$ there exists $\delta>0$ such that the following holds. Let $A_1,A_2,\ldots,A_t$ be measurable subsets of $\T$ such that $\Tsol_{\mathbf{1}}(A_1,\ldots,A_t)<\delta$. Then there exist measurable sets $E_i\subset\T$ such that $(A_1\setminus E_1)\times\cdots\times(A_t\setminus E_t)\cap\ker\mathbf{1}=\emptyset$ and $\mu(E_i)<\epsilon$ for each $i$.
\end{lemma}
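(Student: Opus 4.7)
My plan is to discretise each $A_i$ by unions of dyadic intervals $I_{n,j}$ at a common level $n$, reduce the continuous problem to a finitary removal question in $\Zmod{2^n}$, and invoke a known finitary arithmetic removal lemma as a black box. Given $\epsilon>0$, fix parameters $\delta_0=\delta_1:=\epsilon/2$ and a threshold $\delta>0$ to be specified. By Lemma \ref{LW} there is a common level $n$ and sets $B_i=\bigcup_{j\in J_i}I_{n,j}$, $J_i\subset\{0,\ldots,2^n-1\}$, with $\mu(A_i\Delta B_i)<\delta_1$. A standard multilinear telescoping (using $|\Tsol_{\mathbf{1}}(g_1,\ldots,g_t)|\leq\|g_i\|_{\mathcal{L}_1}$ when $0\leq g_j\leq 1$ for $j\ne i$, which follows from the fact that projecting Haar on $\ker\mathbf{1}$ to a coordinate gives Haar on $\T$) yields $|\Tsol_{\mathbf{1}}(A_1,\ldots,A_t)-\Tsol_{\mathbf{1}}(B_1,\ldots,B_t)|\leq t\delta_1$, so $\Tsol_{\mathbf{1}}(B_1,\ldots,B_t)<\delta+t\delta_1$.

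Next, I expand $\Tsol_{\mathbf{1}}(B_1,\ldots,B_t)$ in terms of solution counts in $\Zmod{2^n}$. Parametrising $\ker\mathbf{1}$ by $(x_1,\ldots,x_{t-1})\in\T^{t-1}$ and writing $x_i=(j_i+\theta_i)/2^n$ with $\theta_i\in[0,1)$ for $i<t$, the condition that $-(x_1+\cdots+x_{t-1})\bmod 1$ lies in some $I_{n,j_t}$ with $j_t\in J_t$ is, for almost every $\theta$, equivalent to
\[
j_1+\cdots+j_t\equiv -(m+1)\pmod{2^n},\qquad m:=\lfloor\theta_1+\cdots+\theta_{t-1}\rfloor\in\{0,\ldots,t-2\},
\]
which one verifies by a short case analysis on the fractional part of $\Theta_0:=\theta_1+\cdots+\theta_{t-1}\in[0,t-1)$. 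Integrating then gives
\[
\Tsol_{\mathbf{1}}(B_1,\ldots,B_t)=\sum_{m=0}^{t-2}p_m^{(t-1)}\,\frac{N_{-(m+1)}}{2^{n(t-1)}},
\]
where $N_c:=\#\{(j_1,\ldots,j_t)\in J_1\times\cdots\times J_t:\sum_i j_i\equiv c\pmod{2^n}\}$ and $p_m^{(t-1)}:=\P(\lfloor\Theta_0\rfloor=m)>0$ is the Irwin--Hall weight, a positive constant depending only on $t$. Setting $p_\ast:=\min_m p_m^{(t-1)}>0$, one deduces $N_c/2^{n(t-1)}\leq(\delta+t\delta_1)/p_\ast$ for each $c\in\{-1,\ldots,-(t-1)\}\bmod 2^n$.

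Now apply the finitary arithmetic removal lemma as a black box: for any $\eta>0$ there is $\delta_2=\delta_2(\eta,t)>0$ such that for any $N$, any $c\in\Zmod{N}$, and any $J_1,\ldots,J_t\subset\Zmod{N}$ with at most $\delta_2 N^{t-1}$ solutions to $\sum j_i\equiv c\pmod{N}$, one can delete sets $E_i^{(c)}\subset J_i$ of relative size $<\eta$ to eliminate all such solutions (Green's theorem for $t=3$, hypergraph removal or Kr\'al--Serra--Vena for larger $t$; in each case one translates by $c$ in one coordinate to reduce to the invariant equation $\sum j_i\equiv 0$). Applying this with $\eta=\delta_0/(t-1)$ for each of the $t-1$ values of $c$ above, and choosing $\delta$ so that $\delta+t\delta_1<p_\ast\delta_2$, we obtain sets whose union $E_i^J:=\bigcup_c E_i^{(c)}$ has relative size $<\delta_0$; setting $E_i^B:=\bigcup_{j\in E_i^J}I_{n,j}$ then gives $\mu(E_i^B)<\delta_0$ and the measure-theoretic vanishing $\Tsol_{\mathbf{1}}(B_1\setminus E_1^B,\ldots,B_t\setminus E_t^B)=0$.

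The only remaining subtlety is that the integral formula ignores the measure-zero boundary case $\Theta_0\in\Z$, which corresponds to $\theta_t=0$, i.e.\ $x_t=j_t/2^n$. Such ``corner'' solutions could survive the removal above, but they lie in the finite set $D_n:=\{j/2^n:0\leq j<2^n\}$. Setting
\[
E_i:=(A_i\setminus B_i)\cup(A_i\cap E_i^B)\cup(A_i\cap D_n),
\]
we have $\mu(E_i)\leq\delta_1+\delta_0+0=\epsilon$, and $A_i\setminus E_i\subset B_i\setminus E_i^B$ contains no dyadic corner. Any putative solution of $\mathbf{1}$ in $\prod_i(A_i\setminus E_i)$ would therefore fall into the ``generic'' $\Theta_0\notin\Z$ regime already eliminated by our choice of $E_i^J$, giving the desired emptiness. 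The main obstacle in the argument is the discretisation bookkeeping---specifically the Irwin--Hall identity together with the parameter fit $\delta+t\delta_1<p_\ast\delta_2$---and the corner-solution issue, which is conceptually a real concern but is handled trivially via the measure-zero adjustment by $D_n$.
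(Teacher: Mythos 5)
Your route is essentially the paper's: discretise at a dyadic level via Lemma \ref{LW}, express the solution measure of the discretised sets as a positive combination of $t-1$ shifted solution counts in $\Zmod{2^n}$ (your Irwin--Hall weights $p_m^{(t-1)}$ are exactly the Eulerian weights $\eulerian{t-1}{m}/(t-1)!$ of Lemma \ref{discrete_solutions}), apply the finitary removal lemma once per shift after translating a coordinate, and pull the removed sets back as unions of dyadic intervals. Your handling of the boundary case by deleting the finite set $D_n$ of level-$n$ dyadic points is a correct (measure-zero) substitute for the paper's final step, where the floor identity $\lfloor Nx_t\rfloor\equiv\lfloor Nx_1\rfloor+\cdots+\lfloor Nx_{t-1}\rfloor+r \pmod N$ with $r\in\{0,\ldots,t-2\}$ holds for every solution and so no exceptional set is needed.

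There is, however, one step that fails as written: the parameter fit you yourself single out. You fix the discretisation accuracy $\delta_1=\epsilon/2$ before anything else and then ask to ``choose $\delta$ so that $\delta+t\delta_1<p_\ast\delta_2$''. But $\delta_2$ is the constant produced by the finitary removal lemma at quality $\eta=\epsilon/(2(t-1))$, and in general $p_\ast\delta_2$ is vastly smaller than $t\delta_1=t\epsilon/2$; in that case no positive $\delta$ satisfies your inequality, so the argument cannot be closed with your order of quantifiers. The repair is immediate and costs nothing: $\delta_2$ depends only on $\epsilon$ and $t$, so choose it first, then take the discretisation accuracy $\delta_1\leq\min\{\epsilon/2,\;p_\ast\delta_2/(4t)\}$ (Lemma \ref{LW} allows any accuracy) and $\delta\leq p_\ast\delta_2/4$; the bound $\mu(E_i)<\epsilon$ only improves. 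This is precisely how the paper orders its choices in deducing Lemma \ref{offdiag} from Lemma \ref{removal_lemma_discrete}: the discretisation error is taken to be $\delta/(t+1)$ with $\delta$ supplied by the discrete removal statement, never a quantity fixed in terms of $\epsilon$ alone. With that reordering your proof is correct and matches the paper's.
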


When $N$ is prime, the equivalence between the $\Zmod{N}$-analogues of Lemmas \ref{Tremoval-t} and \ref{offdiag} follows simply from inverting the dilations. For the circle we can still prove the desired equivalence at the cost of a slight worsening in the dependence between the parameters.
To show that Lemma \ref{offdiag} implies Lemma \ref{Tremoval-t} we use the following.
\begin{lemma}\label{T_L_L'}
Let $L(x) = c_1 x_1 + \cdots + c_t x_t$ and let $A_1,A_2,\ldots,A_t\subset \T$ be measurable. Then
\[\Tsol_L(A_1,\ldots,A_t) \leq \Tsol_\mathbf{1}(c_1 A_1, \ldots, c_t A_t) \leq |c_1 \cdots c_t|\, \Tsol_L(A_1,\ldots,A_t).\]
\end{lemma}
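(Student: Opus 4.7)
The plan is to use a componentwise-multiplication homomorphism between the kernels and apply Lemma \ref{cts-surj-hom}. Define
\[\Phi:\ker L\to\ker\mathbf{1},\qquad \Phi(x_1,\ldots,x_t)=(c_1x_1,\ldots,c_tx_t).\]
This is well-defined since $L(x)=0$ gives $\sum c_ix_i=0$, and it is a continuous group homomorphism. I would first verify that $\Phi$ is surjective: given $(y_1,\ldots,y_t)\in\ker\mathbf{1}$, since each $c_i\cdot\T=\T$ one can pick $x_i\in\T$ with $c_ix_i=y_i$, and then $L(x)=\sum y_i=0$, so $x\in\ker L$ and $\Phi(x)=y$. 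Lemma \ref{cts-surj-hom} then yields $\mu_\mathbf{1}=\mu_L\circ\Phi^{-1}$.

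Next I would compute the preimage of the box $c_1A_1\times\cdots\times c_tA_t$. Writing $K_i$ for the finite subgroup $\ker(c_i:\T\to\T)$ of order $|c_i|$, the condition $c_ix_i\in c_iA_i$ is equivalent to $x_i\in A_i+K_i$, so
\[\Phi^{-1}\bigl((c_1A_1\times\cdots\times c_tA_t)\cap\ker\mathbf{1}\bigr)=\bigl((A_1+K_1)\times\cdots\times(A_t+K_t)\bigr)\cap\ker L.\]
Applying the push-forward identity gives the key formula
\[\Tsol_\mathbf{1}(c_1A_1,\ldots,c_tA_t)=\mu_L\Bigl(\bigl((A_1+K_1)\times\cdots\times(A_t+K_t)\bigr)\cap\ker L\Bigr).\]

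The lower bound $\Tsol_L(A_1,\ldots,A_t)\le\Tsol_\mathbf{1}(c_1A_1,\ldots,c_tA_t)$ is then immediate from monotonicity, since $A_i\subseteq A_i+K_i$. For the upper bound, I would decompose
\[(A_1+K_1)\times\cdots\times(A_t+K_t)=\bigcup_{(k_1,\ldots,k_t)\in K_1\times\cdots\times K_t}(A_1+k_1)\times\cdots\times(A_t+k_t),\]
and apply the union bound inside $\ker L$. The crucial observation is that every tuple $(k_1,\ldots,k_t)$ in the index set lies in $\ker L$, because $c_ik_i=0$ for each $i$ forces $L(k_1,\ldots,k_t)=0$. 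Hence translation by $(k_1,\ldots,k_t)$ preserves both $\ker L$ and the Haar measure $\mu_L$, so each of the $|c_1\cdots c_t|$ summands equals $\Tsol_L(A_1,\ldots,A_t)$, giving the required upper bound.

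The only step that needs care, rather than being entirely routine, is the verification that $\Phi$ is surjective and that Lemma \ref{cts-surj-hom} applies to transport $\mu_L$ to $\mu_\mathbf{1}$; everything else is bookkeeping with translates of the finite subgroups $K_i$. I do not anticipate any real obstacle.
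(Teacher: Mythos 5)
Your proposal is correct and follows essentially the same route as the paper: push forward $\mu_L$ to $\mu_{\mathbf{1}}$ via the dilation map $(x_i)\mapsto(c_ix_i)$ using Lemma \ref{cts-surj-hom}, identify the preimage of $c_1A_1\times\cdots\times c_tA_t$ with $(A_1+K_1)\times\cdots\times(A_t+K_t)$, and finish with monotonicity for the lower bound and translation-invariance of $\mu_L$ under the kernel elements (the paper's ``multilinearity and invariance under translation by elements of $\ker L$'') for the upper bound. No gaps.
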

\begin{proof}
The map $\phi:(x_1,\ldots,x_t)\to (c_1x_1,\ldots,c_tx_t)$ is a continuous endomorphism on $\T^t$ which restricts to a continuous surjective homomorphism from $\ker L$ to $\ker \mathbf{1}$. In fact, since $L$ is the composition $\mathbf{1} \circ \phi$,
we have $\ker L = \phi^{-1} \ker \mathbf{1}$. By Lemma \ref{cts-surj-hom} we therefore have $\mu_\mathbf{1}=\mu_L\circ \phi^{-1}$. It follows that
$\Tsol_\mathbf{1}(A_1,\ldots,A_t)=\Tsol_L(c_1^{-1} A_1,\ldots,c_t^{-1} A_t)$, where $c_i^{-1}$ denotes taking the preimage under dilation by $c_i$.

Applying this to the sets $c_i A_i$ gives $\Tsol_\mathbf{1}(c_1 A_1,\ldots, c_t A_t)=\Tsol_L(c_1^{-1} c_1 A_1,\ldots,c_t^{-1} c_t A_t)$,
and it is easy to check that $c_i^{-1} c_i A_i = \bigcup_{j \in \Zmod{c_i}} \left( A_i + j/c_i \right)$.
The claim then follows from basic properties of $\Tsol_L$, namely multilinearity and invariance under translation by elements of $\ker L$.
\end{proof}

\begin{proof}[Proof of Lemma \ref{Tremoval-t} using Lemma \ref{offdiag}]
Fix $\epsilon>0$ and let $\delta>0$ be as given by Lemma \ref{offdiag}. Now suppose $A_1,A_2,\ldots,A_t\subset \T$ are measurable subsets satisfying $\Tsol_L(A_1,\ldots,A_t)<\delta/|c_1\cdots c_t|$. By Lemma \ref{T_L_L'} we then have
$\Tsol_{\mathbf{1}}(c_1A_1,\ldots,c_tA_t) < \delta$. Lemma \ref{offdiag} therefore provides us with sets $F_i$ with $\mu(F_i) < \epsilon$ such that
$(c_1A_1\setminus F_1)\times\cdots\times(c_tA_t\setminus F_t) \cap \ker\mathbf{1}=\emptyset$.
Let $E_i = c_i^{-1} F_i$, and let $\phi$ be the map from Lemma \ref{T_L_L'}. Since
\[ \phi\big((A_1\setminus E_1)\times\cdots\times(A_t\setminus E_t) \cap\ker L \big) = (c_1A_1\setminus F_1)\times\cdots\times(c_tA_t\setminus F_t)\cap\ker\mathbf{1}=\emptyset,\]
the sets $E_i$ satisfy the conclusion of Lemma \ref{Tremoval-t}.
\end{proof}

We shall now prove Lemma \ref{offdiag}. For notational convenience we assume in the proof that $\mathbf{1}$ has last coefficient $c_t=-1$, without loss of generality. Note that as a special case of \eqref{param} we have
\begin{equation}\label{T_1}
\Tsol_{\mathbf{1}}(A_1,\ldots, A_t)= \int_{\T^{t-1}} 1_{A_1}(x_1)\cdots 1_{A_{t-1}}(x_{t-1}) 1_{A_t}(x_1+x_2+\cdots+x_{t-1}) \ud x_1\cdots \ud x_{t-1}.
\end{equation}

Using Lemma \ref{LW} one may reduce the proof of Lemma \ref{offdiag} to establishing the following variant.

\begin{lemma}\label{removal_lemma_discrete}
For any $\epsilon > 0$ and $t\in \N$ there exists $\delta > 0$ such that the following holds. Let $A_1, A_2,\ldots, A_t \subset \T$ be unions of intervals $[(j-1)/N, j/N)$, $j\in [N]$, for some $N \in \N$. Suppose further that $\Tsol_\mathbf{1}(A_1,\ldots, A_t)< \delta$.
Then there are measurable sets $E_i \subset A_i$ with $\mu(E_i)<\epsilon$ such that
$(A_1 \setminus E_1)\times\cdots\times(A_t \setminus E_t)\,\cap\ker\mathbf{1} = \emptyset$.
\end{lemma}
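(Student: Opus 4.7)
The plan is to reduce Lemma \ref{removal_lemma_discrete} to a finitary removal lemma on $\Zmod{N}$. Write $A_i = \bigcup_{j \in B_i} I_{N,j}$ for some $B_i \subseteq \{1, \ldots, N\}$, which I identify with $\Zmod{N}$. The first task is to express $\Tsol_{\mathbf{1}}(A_1, \ldots, A_t)$ in terms of discrete solution counts. Using \eqref{T_1} and the change of variables $x_i = (j_i-1)/N + y_i$ with $y_i \in [0, 1/N)$, one checks that for each $(j_1, \ldots, j_{t-1}) \in B_1 \times \cdots \times B_{t-1}$ the point $x_1 + \cdots + x_{t-1}$ lies in $I_{N,k}$ with $k \equiv j_1 + \cdots + j_{t-1} + u - (t-2) \pmod N$, where $u = \lfloor N(y_1 + \cdots + y_{t-1})\rfloor \in \{0, 1, \ldots, t-2\}$. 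Writing $v_u$ for the Lebesgue measure of $\{z \in [0,1)^{t-1} : \sum_i z_i \in [u, u+1)\}$ (which is positive for each such $u$, being a constant $\frac{1}{(t-1)!}$ times an Eulerian number), this yields the identity
\[ \Tsol_{\mathbf{1}}(A_1, \ldots, A_t) = \frac{1}{N^{t-1}} \sum_{m=0}^{t-2} v_{(t-2)-m} \cdot N_m, \]
where $N_m := \#\{(j_1, \ldots, j_t) \in B_1 \times \cdots \times B_t : j_1 + \cdots + j_{t-1} \equiv j_t + m \pmod N\}$. Since each $v_u$ is a positive constant depending only on $t$, the hypothesis $\Tsol_{\mathbf{1}}(A_1, \ldots, A_t) < \delta$ forces $N_m \leq C(t)\,\delta\,N^{t-1}$ for every $m \in \{0, 1, \ldots, t-2\}$.

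For each such $m$, the substitution $j_t' = j_t + m$ converts the equation $j_1 + \cdots + j_{t-1} \equiv j_t + m \pmod N$ into the translation-invariant equation $j_1 + \cdots + j_{t-1} = j_t'$ on $\Zmod{N}$ with $j_t' \in B_t + m$, preserving the number of solutions. The standard finitary removal lemma for linear equations in finite abelian groups (as in \cite{KSVRL}, valid for arbitrary $N$) then provides, for $\delta$ small enough in terms of $\epsilon$ and $t$, sets $E_i^{(m)} \subseteq B_i$ with $|E_i^{(m)}|/N < \epsilon/(t-1)$ such that $(B_1 \setminus E_1^{(m)}) \times \cdots \times (B_t \setminus E_t^{(m)})$ contains no solution to the $m$-shifted equation. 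Setting $E_i := \bigcup_{m=0}^{t-2} E_i^{(m)}$, I get $|E_i|/N < \epsilon$ and $(B_1 \setminus E_1) \times \cdots \times (B_t \setminus E_t)$ destroys all $t-1$ shifted equations simultaneously.

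To conclude, I would lift back to $\T$ by setting $E_i^{\T} := \bigcup_{j \in E_i} I_{N,j} \subset A_i$, so that $\mu(E_i^{\T}) = |E_i|/N < \epsilon$. Any purported solution $(x_1, \ldots, x_t) \in (A_1 \setminus E_1^{\T}) \times \cdots \times (A_t \setminus E_t^{\T})$ to $x_1 + \cdots + x_{t-1} = x_t$ in $\T$ would, by the identity established in the first step, produce a tuple $(j_1, \ldots, j_t) \in (B_1 \setminus E_1) \times \cdots \times (B_t \setminus E_t)$ satisfying one of the shifted equations in $\Zmod{N}$, contradicting the previous paragraph. The main obstacle I anticipate is the careful bookkeeping in the first step: boundary effects in $\T$ give rise to $t-1$ distinct shifts, and it is essential that every weight $v_u$ be strictly positive so that smallness of the continuous count transfers to each discrete count separately. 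A subsidiary point is that the finitary removal lemma must be available for $\Zmod{N}$ with $N$ not necessarily prime, which is the setting of \cite{KSVRL}.
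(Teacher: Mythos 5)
Your proposal is correct and follows essentially the same route as the paper: the identity expressing $\Tsol_{\mathbf{1}}$ on $\T$ as a positively weighted (Eulerian-number) combination of the $t-1$ shifted discrete counts is exactly the paper's Lemma \ref{discrete_solutions}, after which both arguments apply a finitary removal lemma on $\Zmod{N}$ to each shift with parameter $\epsilon/(t-1)$, take unions of the removal sets, and lift back to unions of intervals in $\T$. The only cosmetic difference is that you invoke the removal lemma of \cite{KSVRL} while the paper quotes Green's version from \cite{GAR}; both hold for arbitrary $N$, so this changes nothing.
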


\begin{proof}[Proof of Lemma \ref{offdiag} using Lemma \ref{removal_lemma_discrete}]
Fix $\epsilon>0$ and let $\delta \in (0, \epsilon)$ be such that Lemma \ref{removal_lemma_discrete} holds with initial parameter $\epsilon/2$. Let
$A_1,\ldots,A_t\subset\T$ be measurable sets satisfying $\Tsol_\mathbf{1}(A_1,\ldots,A_t)< \delta/(t+1)$. Using \eqref{T_1} and a telescoping expansion (multilinearity of $\Tsol_\mathbf{1}$) we have $|\Tsol_\mathbf{1}(A_1,\ldots,A_t)-\Tsol_\mathbf{1}(B_1,\ldots,B_t)|\leq \mu(A_1\Delta B_1)+\cdots+\mu(A_t\Delta B_t)$
for any measurable sets $B_1,\ldots,B_t$. From Lemma \ref{LW} we obtain a positive integer $n$ and sets $B_i$ that are unions of intervals $[(j-1)2^{-n},j2^{-n})$ such that $\mu(A_i\Delta B_i)\leq \delta/(t+1)$ for each $i \in [t]$. It follows that $\Tsol_\mathbf{1}(B_1,\ldots,B_t)< \delta$. We apply Lemma \ref{removal_lemma_discrete} to these discretized sets, obtaining measurable sets $F_1,\ldots,F_t$ such that $\mu(F_i)< \epsilon/2$ and
\[ (B_1 \setminus F_1)\times\cdots\times(B_t \setminus F_t) \cap \ker\mathbf{1} = \emptyset. \]
Now letting $E_i=F_i\cup (A_i\setminus B_i)$, we have $\mu(E_i)<\epsilon$, and $A_i \setminus E_i\subset B_i\setminus F_i$, whence
$(A_1 \setminus E_1)\times\cdots\times(A_t \setminus E_t) \cap \ker\mathbf{1} = \emptyset$ as required.
\end{proof}

We shall deduce Lemma \ref{removal_lemma_discrete} from the following analogue for $\Zmod{N}$, proved in \cite{GAR}.

\begin{lemma}\label{removal_lemma_finite}
For any $\epsilon > 0$ and $t\in\N$ there exists $\delta > 0$ such that the following holds. Let $N \in \N$ and suppose that $A_1,\ldots, A_t \subset \Zmod{N}$ and that $\Tsol_\mathbf{1}(A_1,\ldots, A_t)< \delta$. 
Then there are sets $E_i \subset A_i$ with $\abs{E_i} \leq \epsilon N$ such that
$(A_1 \setminus E_1)\times\cdots\times(A_t \setminus E_t) \cap \ker\mathbf{1}=\emptyset$.
\end{lemma}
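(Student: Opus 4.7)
The plan is to prove the lemma by the Fourier-analytic approach of Green \cite{GAR}, based on an arithmetic regularity lemma. (An alternative classical route is via the graph/hypergraph removal lemma applied to an auxiliary $t$-partite hypergraph whose cliques encode solutions to $\mathbf{1}$; I will not pursue that here.)

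Step 1 (Regularity and killing the uniform part). Apply an arithmetic regularity lemma at parameter $\eta = \eta(\epsilon, t) > 0$, to be chosen, to each $f_i := 1_{A_i}$. This produces a partition $\Zmod{N} = P_1 \sqcup \cdots \sqcup P_M$ into cells that are (essentially) cosets of a single Bohr set $B \subset \Zmod{N}$ of bounded rank, together with decompositions $f_i = f_i^{\mathrm{str}} + f_i^{\mathrm{unf}}$, where $f_i^{\mathrm{str}}$ is approximately constant on each $P_j$ with value $\alpha_{i,j} := |A_i \cap P_j|/|P_j|$ and $\|f_i^{\mathrm{unf}}\|_{U^2} \leq \eta$. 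The key feature is that $M = M(\eta)$ is independent of $N$. Since $\mathbf{1}$ has coefficients $\pm 1$ it is $\Zmod{N}$-admissible, so Theorem \ref{U2control} and multilinearity of $\Tsol_\mathbf{1}$ yield
\[ |\Tsol_\mathbf{1}(f_1, \ldots, f_t) - \Tsol_\mathbf{1}(f_1^{\mathrm{str}}, \ldots, f_t^{\mathrm{str}})| \leq 2^t \eta. \]
Choosing $\delta$ much smaller than $\eta$, the structured count is also $O_t(\eta)$.

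Step 2 (Pruning low-density cells and eliminating residual solutions). Set $E_i^{\mathrm{low}} := \bigcup \{ A_i \cap P_j : \alpha_{i,j} < \eta \}$, which has size at most $\eta N$. After removing $E_i^{\mathrm{low}}$, every surviving intersection $A_i \cap P_j$ has local density $\geq \eta$. Expanding
\[ \Tsol_\mathbf{1}(f_1^{\mathrm{str}}, \ldots, f_t^{\mathrm{str}}) = \sum_{j_1, \ldots, j_t} \alpha_{1,j_1} \cdots \alpha_{t,j_t} \, \Tsol_\mathbf{1}(1_{P_{j_1}}, \ldots, 1_{P_{j_t}}), \]
each surviving tuple $(j_1, \ldots, j_t)$ contributes at least $\eta^t\, \Tsol_\mathbf{1}(1_{P_{j_1}}, \ldots, 1_{P_{j_t}})$ to the left-hand side. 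Provided a uniform lower bound $\Tsol_\mathbf{1}(1_{P_{j_1}}, \ldots, 1_{P_{j_t}}) \geq c(\eta) > 0$ holds whenever this quantity is nonzero, the number of surviving cell-tuples that actually carry solutions is at most $O_t(\eta / (\eta^t c(\eta)))$. For each such residual tuple I add $A_1 \cap P_{j_1}$ to $E_1^{\mathrm{res}}$: this eliminates every remaining $\mathbf{1}$-solution while contributing at most $|P_{j_1}|$ per tuple to $|E_1^{\mathrm{res}}|$. Choosing $\eta$ much smaller than $\epsilon$ and then $\delta$ small enough, the sets $E_i := E_i^{\mathrm{low}} \cup E_i^{\mathrm{res}}$ satisfy $|E_i| < \epsilon N$ and no solutions survive in $(A_1 \setminus E_1) \times \cdots \times (A_t \setminus E_t)$.

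The main obstacle is the uniform lower bound on $\Tsol_\mathbf{1}(1_{P_{j_1}}, \ldots, 1_{P_{j_t}})$ used in Step 2: when this count is nonzero, one must show it is bounded below by a constant depending only on the regularity data (the rank and radius of $B$), independently of $N$. This follows from the fact that Bohr sets of bounded rank behave like generalized arithmetic progressions under addition, so the sum of any $t-1$ of their cosets has substantial intersection with any remaining coset it meets; verifying this carefully is the technical core of the argument and is precisely the reason an arithmetic (rather than purely combinatorial) regularity lemma is required here.
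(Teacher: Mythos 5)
First, note that the paper does not prove Lemma \ref{removal_lemma_finite} at all: it is imported as a black box from Green \cite{GAR} (with \cite{KSVRL} mentioned as an alternative combinatorial route), precisely because its proof is a substantial piece of work. Your proposal is therefore an attempt to reprove the cited result along Green's Fourier-regularity lines, and as a sketch it contains a genuine gap at its quantitative core.

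The gap is the interplay between the uniformity error and the per-cell main term. You take a single \emph{global} decomposition $f_i = f_i^{\mathrm{str}} + f_i^{\mathrm{unf}}$ with $\norm{f_i^{\mathrm{unf}}}_{U^2}\leq\eta$, so by Theorem \ref{U2control} the error in replacing $\Tsol_\mathbf{1}(f_1,\ldots,f_t)$ by the structured count is of order $\eta$ (in the normalised count). But the main term you need to detect -- the contribution of one tuple of high-density cells -- is of order $\eta^t c(\eta)$, where $c(\eta)$ is your lower bound on $\Tsol_\mathbf{1}(1_{P_{j_1}},\ldots,1_{P_{j_t}})$ and necessarily depends on the complexity $M(\eta)$ of the partition (the atoms have normalised size roughly $1/M$, so $c \lesssim M^{-(t-1)}$). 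To conclude anything you would need $\eta \ll \eta^t c(\eta)$ with $c(\eta)$ determined only after $\eta$ is chosen: this is circular and false for small $\eta$, and it cannot be rescued by shrinking $\delta$, since $\delta$ only controls the total solution count, not the $O_t(\eta)$ uniformity error. This is exactly why Green's actual argument measures uniformity \emph{locally}: his regularity lemma guarantees that for all but few atoms the set $A_i\cap P_j$ is Fourier-uniform \emph{relative to the Bohr set/atom}, so the counting error for a fixed tuple of atoms is small compared with the local normalisation $\abs{B}^{t-1}$, and one may take the local uniformity parameter far smaller than $\eta^t$ without circularity (alternatively one can use a strong-regularity-type decomposition in which $\norm{f_i^{\mathrm{unf}}}_{U^2}\leq\varepsilon(M)$ for an arbitrary prescribed function $\varepsilon$). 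Your Step 2 bookkeeping has a related problem: bounding the number of solution-carrying tuples by $O_t(\eta/(\eta^t c(\eta)))$ and deleting a whole cell-intersection (size up to $\abs{P_{j_1}}$) for each one does not give $\abs{E_1}\leq\epsilon N$ unless that number is in fact $0$, which again forces the impossible inequality above; the standard argument instead chooses parameters so that a \emph{single} surviving compatible tuple of dense cells already forces $\Tsol_\mathbf{1}(A_1,\ldots,A_t)\geq\delta$, a contradiction. Finally, the "one solution in compatible atoms implies many" step, which you correctly flag as the technical core, also needs boundary trimming of the atoms (perturbing a solution by elements of a narrower Bohr set can cross atom boundaries), which your sketch does not address. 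In short, the overall strategy is the right one and matches the source the paper cites, but as written the global $U^2$ decomposition cannot yield the lemma; the localisation of uniformity to the Bohr atoms is not a technicality but the heart of Green's proof.
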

To use this we need to express the solution measure $\Tsol_\mathbf{1}$ on $\T$ in terms of solution measures on $\Z_N$.
Recall that the Eulerian number $\eulerian{n}{k}$ is the number of permutations $a_1, \ldots, a_n$ of $[n]$ in which there are precisely $k$ values of $i$ such that $a_i < a_{i+1}$.

\begin{lemma}\label{discrete_solutions}
Let $t\geq 3$ and suppose $A_1,\ldots, A_t \subset \T$ are unions of intervals $[(j-1)/N, j/N)$, $j\in [N]$, for some positive integer $N$. Let $A_i'\subset \Zmod{N}$ be defined by $1_{A_i'}(x)=1_{A_i}(x/N)$. Then 
\[ \Tsol_\mathbf{1}(A_1,\ldots, A_t) = \frac{1}{(t-1)!}\sum_{r=0}^{t-2} \eulerian{t-1}{r}\, \Tsol_\mathbf{1}(A_1',\ldots, A_t'-r). \]
In particular, $\Tsol_\mathbf{1}(A_1',\ldots, A_t'-r) \leq (t-1)!\, \Tsol_\mathbf{1}(A_1,\ldots, A_t)$ for any $r\in \{0,1, \ldots, t-2\}$.
\end{lemma}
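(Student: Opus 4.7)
The plan is to parametrize each $x_i\in\T$ as $x_i=(j_i+u_i)/N$ with $j_i\in\{0,1,\ldots,N-1\}$ (regarded as an element of $\Zmod{N}$) and $u_i\in[0,1)$, and then unfold the formula \eqref{T_1} for $\Tsol_\mathbf{1}(A_1,\ldots,A_t)$. Since each $A_i$ is a union of the discretizing intervals, the indicator $1_{A_i}((j_i+u_i)/N)$ equals $1_{A_i'}(j_i)$ and is independent of $u_i$. Using that Haar measure on $\T$ decomposes as $\int_\T f\ud x = \tfrac{1}{N}\sum_{j=0}^{N-1}\int_0^1 f((j+u)/N)\ud u$, the integral splits into a discrete sum over $(j_1,\ldots,j_{t-1})\in\Zmod{N}^{t-1}$ coupled to the continuous integral over $(u_1,\ldots,u_{t-1})\in[0,1)^{t-1}$ only through the final factor $1_{A_t}(x_1+\cdots+x_{t-1})$.

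To analyse that last indicator, write $U=u_1+\cdots+u_{t-1}$, $r=\lfloor U\rfloor\in\{0,1,\ldots,t-2\}$ and $v=\{U\}\in[0,1)$ (almost everywhere). Then
\[
x_1+\cdots+x_{t-1} = \frac{j_1+\cdots+j_{t-1}+r+v}{N}\pmod 1
\]
lies in the discretizing interval indexed by $(j_1+\cdots+j_{t-1}+r)\bmod N\in\Zmod{N}$, so $1_{A_t}(x_1+\cdots+x_{t-1})=1_{A_t'}(j_1+\cdots+j_{t-1}+r)$. Splitting the $u$-integral according to the value of $r$ and recognising the inner sum over $(j_1,\ldots,j_{t-1})$ as $N^{t-1}\,\Tsol_\mathbf{1}(A_1',\ldots,A_t'-r)$ (so that the factor $N^{t-1}$ cancels the corresponding Haar normalisation on $\T^{t-1}$) yields
\[
\Tsol_\mathbf{1}(A_1,\ldots,A_t)=\sum_{r=0}^{t-2} p_r\,\Tsol_\mathbf{1}(A_1',\ldots,A_t'-r),
\]
where $p_r := \mu\{u\in[0,1)^{t-1}:\lfloor u_1+\cdots+u_{t-1}\rfloor = r\}$.

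The only remaining input is the classical Laplace / Irwin--Hall identity $p_r=\eulerian{t-1}{r}/(t-1)!$ expressing the distribution of the integer part of a sum of i.i.d.\ uniform $[0,1)$ random variables via Eulerian numbers; it can be derived by inclusion--exclusion from the volume formula for the simplex $\{u\in[0,1)^n:u_1+\cdots+u_n<k+1\}$. Substituting gives the main identity of the lemma. The \emph{in particular} statement is then immediate: every summand is non-negative and $\eulerian{t-1}{r}\geq 1$ for $r\in\{0,1,\ldots,t-2\}$, so each term on its own is bounded by $\Tsol_\mathbf{1}(A_1,\ldots,A_t)$, giving $\Tsol_\mathbf{1}(A_1',\ldots,A_t'-r)\leq (t-1)!\,\Tsol_\mathbf{1}(A_1,\ldots,A_t)$. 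The principal obstacle is simply the careful bookkeeping of integer parts and reductions modulo $N$ in the substitution; the Eulerian identity itself is standard.
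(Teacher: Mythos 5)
Your proposal is correct and follows essentially the same route as the paper: split each $x_i$ into its residue part in $\Zmod{N}$ and a uniform fractional part, observe that only the carry $r=\lfloor u_1+\cdots+u_{t-1}\rfloor$ couples the discrete and continuous variables, and invoke the Eulerian-number distribution of that carry (the paper cites the identity $\int_{[0,1]^n} f(\lfloor x_1+\cdots+x_n\rfloor)\ud x=\sum_k \eulerian{n}{k}f(k)/n!$ from Graham--Knuth--Patashnik, which is your Laplace/Irwin--Hall fact). Your deduction of the ``in particular'' bound from non-negativity and $\eulerian{t-1}{r}\geq 1$ also matches the intended argument.
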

\begin{proof}
For any $x \in \T$ we have $1_{A_i}(x) = 1_{A_i}( \lfloor N x \rfloor / N )$, where multiplication by $N$, floor function, and division by $N$ are all taken in $\R$.
Then $\Tsol_\mathbf{1}(A_1,\ldots, A_t)$ equals
\begin{eqnarray*}
&& \int_{\T^{t-1}} 1_{A_1}(\lfloor Nx_1 \rfloor /N)\cdots 1_{A_t}( \lfloor N(x_1+\cdots+x_{t-1}) \rfloor/N) \ud x_1\cdots \ud x_{t-1} \\
&=& \sum_{a_1,\ldots,a_{t-1}\in \Zmod{N}} 1_{A_1}(a_1/N)\cdots 1_{A_{t-1}}(a_{t-1}/N) \\
&&\hspace{1.5cm}\cdot\int_{[0,1/N)^{t-1}} 1_{A_t}( \lfloor N((a_1/N + y_1)+\cdots+ (a_t/N + y_{t-1})) \rfloor/N) \ud y_1\cdots \ud y_{t-1} \\
&=& \frac{1}{N^{t-1}} \sum_{a_1,\ldots,a_{t-1} \in \Zmod{N}} 1_{A_1'}(a_1)\cdots 1_{A_{t-1}'}(a_{t-1})\\
&&\hspace{1.5cm}\cdot\int_{[0,1)^{t-1}} 1_{A_t'}(a_1+\cdots+a_{t-1}+\lfloor y_1+\cdots+ y_{t-1} \rfloor) \ud y_1\cdots \ud y_{t-1}.
\end{eqnarray*}
For any real-valued function $f$ on the integers, \cite[6.65]{graham-knuth-patashnik} gives
\[ \int_0^1 \cdots \int_0^1 f(\lfloor x_1 + \cdots + x_n \rfloor) \ud x_1 \cdots \ud x_n = \sum_{k=0}^{n-1} \eulerian{n}{k} \frac{f(k)}{n!}. \]
The result now follows immediately.
\end{proof}

\begin{proof}[Proof of Lemma \ref{removal_lemma_discrete}]
We can assume that $t\geq 3$. Given $\epsilon > 0$, set $\epsilon' = \epsilon/(t-1)$ and let $\delta'$ be given by Lemma \ref{removal_lemma_finite} applied with initial parameter $\epsilon'$. Define $\delta = \delta'/(t-1)!$ and let $A_1,\ldots,A_t \subset \T$ be sets that are unions of intervals of the form $[j/N, (j+1)/N)$ for some positive integer $N$ and satisfy $\Tsol_\mathbf{1}(A_1,\ldots, A_t)< \delta$.

Let the sets $A_i' \subset \Zmod{N}$ be as in Lemma \ref{discrete_solutions}. We then have $\Tsol_\mathbf{1}(A_1',\ldots, A_t'-r)< \delta'$ for each $r = 0, \ldots, t-2$, and so Lemma \ref{removal_lemma_finite} gives us sets $E_{i,r}' \subset A_i'$ such that $|E_{i,r}'| \leq \epsilon' N$ and
\begin{equation}\label{removed'}
(A_1' \setminus E_{1,r}')\times\cdots\times\big((A_t'-r) \setminus (E_{t,r}'-r)\big) \cap \ker\mathbf{1}=\emptyset.
\end{equation}
Setting $E_i' = \bigcup_r E_{i,r}'$ we therefore see that there are no solutions to $y_1 + \cdots + y_{t-1} = y_t - r$ with $y_i \in A_i' \setminus E_i'$ for $r \in [0, t-2] \subset \Zmod{N}$.

We now define corresponding removal-sets in $\T$: for each $i\in [t]$ let
\[ E_i = \bigcup_{x \in E_i'} [x/N, (x+1)/N). \]
Since $\abs{E_i'} \leq \epsilon N$ we have $\mu(E_i) \leq \epsilon$ for each $i$. Now suppose that we had an element
\[ (x_1, \ldots, x_t)\; \in \;(A_1 \setminus E_1) \times \cdots \times (A_t \setminus E_t) \cap \ker \mathbf{1}. \]
Then, since each of the sets $A_i \setminus E_i$ is a union of intervals $[j/N, (j+1)/N)$, we have that the element $\lfloor N x_i \rfloor$ of $\Zmod{N}$ lies in $A_i' \setminus E_i'$. But we also have $x_t = x_1 + \cdots + x_{t-1}$, whence $\lfloor N x_t \rfloor = \lfloor N x_1 \rfloor + \cdots + \lfloor N x_{t-1} \rfloor + r$ for some $r \in [0, t-2] \subset \Zmod{N}$. This contradicts \eqref{removed'}, and therefore 
$(A_1 \setminus E_1) \times \cdots \times (A_t \setminus E_t) \cap \ker \mathbf{1} = \emptyset$ as required.
\end{proof}

\section{Proofs of convergence and positivity}\label{Final}

We can now prove our main result.

\begin{proof}[Proof of Theorem \ref{mainres}]
We are given a finite family $\mathcal{F}$ of linear forms. Let $n$ be the cardinality of $\mathcal{F}$, and fix $\epsilon>0$.
Let $\delta \in (0, \epsilon/2)$ be such that Lemma \ref{Tremoval-t} and its analogue for $\Zmod{p}$ both work for each $L\in\mathcal{F}$, with initial parameter $\epsilon/2n$. Let $t$ be the maximum number of variables that occurs in a form in $\mathcal{F}$ and let $C=C(\epsilon,\mathcal{F})$ be such that Corollary \ref{transfer-cor} (transference) and Lemma \ref{MainF2S} (functions to sets) both work when applied with initial parameter $\delta/2t$ for any $p > C$ and $L\in\mathcal{F}$. We claim that $\abs{ \md_\mathcal{F}(\T) - \md_\mathcal{F}(\Zmod{p}) } \leq \epsilon$ for any prime $p > C$.

To see that $\md_\mathcal{F}(\T)\geq \md_\mathcal{F}(\Zmod{p})-\epsilon$, let $\alpha= \md_\mathcal{F}(\Zmod{p})$ and let $A\subset \Zmod{p}$ be $\mathcal{F}$-free with $\mu(A)=\alpha$. Corollary \ref{transfer-cor} then gives us a measurable function $g : \T\to [0,1]$ with $\int_\T g=\alpha$ such that $\Tsol_L(g) < \delta/2$ for every $L\in\mathcal{F}$. Applying Lemma \ref{MainF2S} to $g$ with initial parameter $\delta/2t$, we obtain a measurable subset $B$ of $\T$ of density at least $\alpha-\delta$ such that $\Tsol_L(B) < \delta$ for every $L\in\mathcal{F}$.

We now apply the removal lemma on $\T$. By our choice of $\delta$, Lemma \ref{Tremoval-t} gives us an $\mathcal{F}$-free subset $D$ of $B$ with $\mu(D)\geq \mu(B)- n \epsilon/2n\geq \alpha -\delta-\epsilon/2$. Therefore $\md_\mathcal{F}(\T)\geq \md_\mathcal{F}(\Zmod{p})-\epsilon$ as required.

The same argument, but with the roles of $\Zmod{p}$ and $\T$ swapped, shows that we also have $\md_\mathcal{F}(\Zmod{p})\geq \md_\mathcal{F}(\T)-\epsilon$, and so the result follows.
\end{proof}
\begin{remark}\label{rem}
One of our aims for the argument above was to treat the direction $d_\mathcal{F}(\Zmod{p})>d_\mathcal{F}(\T)-\epsilon$ and its opposite in a unified manner. It should be noted however that the first direction can also be treated more directly, without using Fourier analysis. In a nutshell, if $A\subset \T$ is $\mathcal{F}$-free with $\mu(A)>d_\mathcal{F}(\T)-\epsilon$ then the continuity of the forms in $\mathcal{F}$ implies that one can find an $\mathcal{F}$-free \emph{open} set $A'$ of measure at least $d_\mathcal{F}(\T)-2\epsilon$, and then for large $p$ the set $A_p=\{x\in \Zmod{p}: x/p\in A'\}$ is $\mathcal{F}$-free and of density at least $d_\mathcal{F}(\T)-3\epsilon$ in $\Zmod{p}$.
\end{remark}
We now prove Proposition \ref{positivity}, which said that $\md_\mathcal{F}(\T)$ is positive for any finite family $\mathcal{F}$ of non-invariant forms. We can do this by modifying an idea employed by Ruzsa \cite{ruzsa:linear-equationsII}. For a form $L(x) = c_1 x_1 + \cdots + c_t x_t$ we write $s_L=\abs{c_1} + \cdots + \abs{c_t}$.

\begin{proposition}
Let $\mathcal{F}$ be a finite family of non-invariant linear forms. Then
\[ \md_\mathcal{F}(\T) \geq \left(\sum_{L \in \mathcal{F}} s_L\right)^{-1}. \]
\end{proposition}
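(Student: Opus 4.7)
The plan is to exhibit, for each $\ell < (\sum_{L\in\mathcal{F}} s_L)^{-1}$, an $\mathcal{F}$-free open arc $A_a := (a, a+\ell) \subset \T$ of length $\ell$, by showing that the set of $a \in \T$ for which $A_a$ fails to be $\mathcal{F}$-free has Haar measure strictly less than $1$. Writing $x_i = a + y_i$ with $y_i \in (0,\ell)$ and $L = c_1 x_1 + \cdots + c_t x_t \in \mathcal{F}$, one has $L(x) = \sigma_L\, a + c_1 y_1 + \cdots + c_t y_t$ in $\R$, where $\sigma_L := c_1 + \cdots + c_t \neq 0$ by non-invariance. As $y$ ranges over $(0,\ell)^t$, the quantity $c_1 y_1 + \cdots + c_t y_t$ ranges over an open real interval $J_L$ of length exactly $s_L \ell$.

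First I would observe that $A_a$ contains an $L$-solution precisely when $(\sigma_L a + J_L) \cap \Z \neq \emptyset$, equivalently when the reduction of $\sigma_L a$ modulo $1$ lies in the set $U_L \subset \T$ obtained by reducing $-J_L$ modulo $1$. Provided $s_L \ell < 1$, the set $U_L$ is an open arc of measure exactly $s_L \ell$. Since $\sigma_L \neq 0$, dilation by $\sigma_L$ is a surjective continuous endomorphism of $\T$, so Lemma \ref{cts-surj-hom} gives that the preimage $B_L := \sigma_L^{-1}(U_L) \subset \T$ has Haar measure $s_L \ell$.

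The set of ``bad'' $a$ is therefore contained in $\bigcup_{L \in \mathcal{F}} B_L$, whose measure is at most $\sum_{L \in \mathcal{F}} s_L \ell$ by the union bound. For any $\ell < (\sum_L s_L)^{-1}$ this total is strictly less than $1$, so some good $a$ exists and produces an $\mathcal{F}$-free arc of measure $\ell$; letting $\ell$ tend to $(\sum_L s_L)^{-1}$ from below yields $\md_\mathcal{F}(\T) \geq (\sum_L s_L)^{-1}$. There is no substantial obstacle here: the non-invariance hypothesis is exactly what is needed so that every $\sigma_L$-dilation is surjective and Lemma \ref{cts-surj-hom} applies, after which the union bound closes the argument.
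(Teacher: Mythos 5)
Your argument is correct and is essentially the paper's own proof: both translate a short interval and use non-invariance ($\sigma_L\neq 0$) together with measure preservation under dilation and a union bound to show that a positive-measure set of good translation parameters exists. The only cosmetic difference is that you take arcs of length $\ell<\bigl(\sum_L s_L\bigr)^{-1}$ and pass to the limit, whereas the paper works directly with length exactly $\bigl(\sum_L s_L\bigr)^{-1}$, using that finitely many open intervals of total length at most $1$ cannot cover $\T$.
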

\begin{proof}
Let $s = \sum_{L \in \mathcal{F}} s_L$ and let $A$ be the interval $(-1/2s, 1/2s)$ embedded in $\T$. We claim that there is a translate $A-y$ of $A$ that is $\mathcal{F}$-free. This will be the case if $L(y,\ldots,y) \notin L(A \times \cdots \times A) = (-s_L/2s, s_L/2s)$ for each $L \in \mathcal{F}$. Since each $L$ is non-invariant, each such condition on $y$ excludes a finite union of open intervals, with total length $s_L/s$. Thus all the conditions together exclude a finite union of open intervals, the lengths of which sum to at most $1$. Hence there is some $y \in \T$ outside this union of intervals, and the result follows.
\end{proof}

\section{Concluding remarks}

Our proof of Theorem \ref{mainres} consists essentially of a combination of the transference result Proposition \ref{transfer} with the removal result Lemma \ref{Tremoval-t}. Two remarks should be made about this. The first is that the combination of Fourier-regularization (a key tool in the proof of the transference result) and removal results has been used successfully before; in particular, Green \cite[Theorem 9.3]{GAR} used it to relate the number of subsets of $[N]$ that are free from non-trivial solutions to $L(x)=0$ to the maximum size of an $L$-free subset of $[N]$. The second remark is that the argument in our proof of Theorem \ref{mainres} readily yields analogues of the theorem for many other families of groups, provided in particular that the appropriate removal lemmas are available (examples of such families include $(\Zmod{kp})$, where $k\in \N$ is fixed and $p$ ranges over the primes). We have not treated such generalizations here in order to avoid certain technicalities. Let us note, however, that if all one is interested in is convergence of maximal densities, rather than convergence to a particular quantity on a group, then all the theory one needs is finitary and the appropriate removal lemmas are well-known.

It would be interesting to generalize Theorem \ref{mainres} to allow the family $\mathcal{F}$ to consist not just of single linear equations but also of systems of linear equations. To this end it can be useful to classify systems according to a notion of \emph{complexity} related to the Gowers norms (see \cite{gowers-wolf}). The methods of this paper readily extend to give convergence of $d_\mathcal{F}(\Z_p)$ for a family of systems of complexity 1, but establishing $\T$ as a limit group along these lines requires an extension of Lemma \ref{Tremoval-t}. Convergence for systems of greater complexity requires other methods.

Finally, regarding the original question of Ruzsa mentioned in the introduction, we note that there is a simple transference result for functions on $[N]$, proved using an argument somewhat different from that employed for transference here, and that this can be used to answer Ruzsa's question affirmatively (we shall detail this elsewhere).\\

\textbf{Acknowledgements.} The authors are grateful to Tom Sanders for helpful comments and to an anonymous referee for pointing out the argument in Remark \ref{rem}. The second-named author would also like to thank Ben Green for his supervision and encouragement that led to the main results of \cite[Chapter 4]{OlofTh}.

\end{document}